\newtheorem{lemma}{Lemma}
\newtheorem{theorem}{Theorem}
\renewenvironment{proof}{ 
\noindent{\bf Proof.} \rm}{\penalty-20\null\hfill $\square$}
\numberwithin{equation}{section}
\newcommand{\C}{{\mathbb C}}
\newcommand{\N}{{\mathbb N}}
\newcommand{\R}{{\mathbb R}}
\newcommand{\Z}{{\mathbb Z}}
\newcommand{\bfe}{\mathbf{e}}
\newcommand{\bff}{\mathbf{f}}
\newcommand{\bfn}{\mathbf{n}}
\newcommand{\bfu}{\mathbf{u}}
\newcommand{\bfv}{\mathbf{v}}
\newcommand{\bfw}{\mathbf{w}}
\newcommand{\bfx}{\mathbf{x}}
\newcommand{\bfz}{\mathbf{z}}
\newcommand{\bfeta}{\mbox{\boldmath $\eta$}}
\newcommand{\bfzeta}{\mbox{\boldmath $\zeta$}}
\newcommand{\bfphi}{\mbox{\boldmath $\phi$}}
\newcommand{\bfpsi}{\mbox{\boldmath $\psi$}}
\newcommand{\bfomega}{\mbox{\boldmath $\omega$}}
\newcommand{\bfC}{\mathbf{C}}
\newcommand{\bfH}{\mathbf{H}}
\newcommand{\bfL}{\mathbf{L}}
\newcommand{\bfU}{\mathbf{U}}
\newcommand{\bfW}{\mathbf{W}}
\newcommand{\bfX}{\mathbf{X}}
\newcommand{\cL}{{\cal L}}
\newcommand{\cN}{{\cal N}}
\newcommand{\cT}{{\cal T}}
\newcommand{\calB}{\mathscr{B}}
\newcommand{\calL}{\mathscr{L}}
\newcommand{\rmd}{\mathrm{d}}
\newcommand{\rme}{\mathrm{e}}
\newcommand{\rmi}{\mathrm{i}}
\newcommand{\Spe}{\sigma}
\newcommand{\Spess}{\Spe_{\rm ess}}
\newcommand{\SpessEN}{\widetilde{\Spe}_{\rm ess}}
\newcommand{\Spep}{\Spe_{\rm p}}
\newcommand{\Spea}{\Spe_{\rm a}}
\newcommand{\Sper}{\Spe_{\rm r}}
\newcommand{\nul}{{\rm nul\br}}
\newcommand{\deff}{{\rm def\br}}
\newcommand{\ind}{{\rm ind\br}}
\newcommand{\Cs}{\bfC^{\infty}_{0,\sigma}(\Omega)}
\newcommand{\Ls}{\bfL^2_{\sigma}(\Omega)}
\newcommand{\Ps}{P_{\sigma}}
\newcommand{\Wns}{\bfW^{1,2}_{0,\sigma}(\Omega)}
\renewcommand{\Re}{{\rm Re}}
\newcommand{\bfvr}{\bfv_{\rm r}}
\newcommand{\bfvi}{\bfv_{\rm i}}
\newcommand{\Tone}{T_1}
\newcommand{\Tstar}{t^*}
\newcommand{\bfvstar}{\bfv^*}
\renewcommand{\div}{\mathrm{div}\,}
\newcommand{\bfzero}{\mathbf{0}}
\newcommand{\br}{\hbox to 0.7pt{}}
\newcommand{\K}{$\sqcap \hskip -6.7 pt \sqcup$}
\newcounter{constants}
\newcommand{\cn}[2]{ \addtocounter{constants}{1}
\newcounter{c#1#2}
\setcounter{c#1#2}{\value{constants}}}
\newcommand{\cc}[2]{c_{\arabic{c#1#2}}}
\begin{document}

\title{\LARGE \bf Nonlinear spectral instability of steady-state flow of a viscous liquid past a rotating obstacle}

\author{Giovanni P.~Galdi, \ Ji\v{r}\'{\i} Neustupa}

\date{}

\maketitle

\begin{abstract}
We show that a steady-state solution 
to the system of equations
of a Navier-Stokes flow past a rotating
body is nonlinealy unstable if the associated  linear operator $\cL$ has a part
of the spectrum in the half-plane $\{\lambda\in\C;\ \Re\,
\lambda>0\}$.
Our result does not follow from known methods, 
mainly
because the basic nonlinear operator is not bounded in the
same space in which the instability is studied. As an 
auxiliary result of independent interest, we also show that the uniform growth bound of
the $C_0$--semigroup $\rme^{\cL t}$ is equal to the spectral bound
of operator $\cL$. 
\end{abstract}

\vspace{1mm} \noindent
{\it AMS math.~classification (2010):} \ 35Q30, 35B35, 47D60,
70K50, 76D05.

\noindent
{\it Keywords:} \ Navier--Stokes equations, rotation, instability,
$C_0$--semigroups, uniform growth bound.


\section{Introduction} \label{Se1}

{\bf \ref{Se1}.1. The initial--boundary valued problem.} \
Suppose  a compact body $\calB$ moves rigidly in an otherwise quiescent Navier-Stokes liquid translating and rotating about the $x_1$--axis
with a constant angular velocity $\omega$ and  a constant velocity
$u_{\infty}$.  In order to avoid that the region of flow be time-dependent, instead of referring the motion of the liquid to an inertial frame, it is convenient to describe it 
from a coordinate system 
attached to the body. In such a system, the relevant equations then take the following 
form
\begin{align}
\partial_t\bfu-
(\omega\br\bfe_1\times\bfx+u_{\infty}\bfe_1)\cdot\nabla\bfu+
\omega\br\bfe_1\times\bfu+\bfu\cdot\nabla\bfu\
&=\ -\nabla p+\nu\Delta\bfu+\bff, \label{1.1a} \\
\noalign{\vskip 2pt}
\div\bfu\ &=\ 0, \label{1.1b}
\end{align}
where $\bfu$, respectively $p$, are the transformed velocity,
respectively the pressure, $\bfe_1$ denotes the unit vector in
the direction of the $x_1$--axis, $\bff$ is the transformed
external body force and $\bfx$ is the transformed spatial
variable. The system (\ref{1.1a}), (\ref{1.1b})  is considered for $(\bfx,t)\in \Omega\times(0,\infty)$, with $\Omega$ a fixed domain. The no--slip boundary
condition for the velocity on the surface of the body
transforms to
\begin{equation}
\bfu(\bfx,t)\ =\ u_{\infty}\bfe_1+\omega\br\bfe_1\times\bfx \qquad
\mbox{for}\ \bfx\in\partial\Omega, \label{1.1c}
\end{equation}
and the information that the fluid is at rest in infinity leads to
the condition
\begin{equation}
\bfu(\bfx,t)\ \to\ \bfzero \qquad \mbox{for}\ |\bfx|\to\infty.
\label{1.1d}
\end{equation}
The details on the used transformation and the way one can
obtain equation (\ref{1.1a}) from the ``classical''
Navier--Stokes equation are described in many previous papers,
see e.g.~\cite{Hi2}, \cite{FaNe1} or \cite{GaNe1}.

The global in time existence of weak solutions to the problem
(\ref{1.1a})--(\ref{1.1d}), with a prescribed initial velocity
in $\bfL^2(\Omega)$, which is  divergence free (in the sense of
distributions) and such that its normal component coincides
with the normal component of $u_{\infty}\br\bfe_1+
\omega\br\bfe_1\times\bfx$ for $\bfx\in\partial\Omega$ (in a
certain sense of traces), was proven by Borchers in \cite{Bo}.
(It also follows from paper \cite{Ne6} on the weak solvability
of the Navier--Stokes equations in a domain with generally
moving boundaries.) The existence of strong solutions on a
``short'' time interval $(0,T)$, under the condition that
$u_{\infty}=0$, has been proven by Hishida \cite{Hi2}, Galdi,
Silvestre \cite{GaSi1} and Cumsille, Tucsnak \cite{CuTu}. The
latter formulate the
problem in an inertial frame, where the region of flow is time-dependent. They consider a body force $\bff$ locally square
integrable from $(0,\infty)$ to $\bfW^{1,\infty}(\R^3)$ and the
no--slip boundary condition for the velocity on
the body $\mathscr B$. Their main result, reformulated in terms of
the transformed velocity $\bfu$ satisfying
(\ref{1.1a})--(\ref{1.1d}), states that for a given $\bfu_0\in\Ls\cap\Wns$,
there exists $T>0$ and a unique solution $\bfu$ of the problem
(\ref{1.1a})--(\ref{1.1d}) such that $\bfu|_{t=0}=\bfu_0$ and
\begin{equation}
\begin{aligned}[2]
& \bfu\ \in\ L^2\bigl(0,T_0;\, \bfW^{2,2}(\Omega)\bigr)\cap
C\bigl([0,T_0];\, \bfW^{1,2}(\Omega)\bigr), \\
& \partial_t\bfu-(\omega\br\bfe_1\times\bfx+u_{\infty}\bfe_1)\cdot
\nabla\bfu+\bfomega\times\bfu\ \in\ L^2\bigl((0,T_0;\,
\bfL^2(\Omega)\bigr)
\end{aligned} \label{1.7}
\end{equation}
for every $T_0\in(0,T)$. Moreover, either $T=\infty$  or else the norm of $\bfu$ in $\bfW^{1,2}(\Omega)$
tends to infinity for $t\to T-$. Although a translational
motion of  $\calB$ is not considered in \cite{CuTu} (which
corresponds to $u_{\infty}=0$), the results can be extended to
the case $u_{\infty}\not=0$ by means of standard arguments
similar to the case $\omega=0$, $u_{\infty}\not=0$, studied
e.g.~in \cite{Hey3}. \par Galdi and Silvestre \cite{GaSi3} studied a
more general problem, i.e.~the motion of a rigid body in a
viscous incompressible fluid under the action of a given force
and torque. They considered the body force acting on the fluid
to be potential, which corresponds to $\bff=\bfzero$ in
equation (\ref{1.1a}), but, by the same method,  one to extend their result to a large class of non-zero
body forces $\bff$. In \cite{GaSi3}, in addition to  velocity and pressure fields, also the translational velocity and the angular
velocity of  the body become unknown.
However, the main findings can  be used also for the problem at hand, by prescribing  translational
and angular velocities, and then
calculating the force and torque needed for the body
performs the requested motion. Thus, considering the particular case where
translational velocity and 
angular velocity have the same direction $\bfe_1$, one obtains an
existence result for problem (\ref{1.1a})--(\ref{1.1d}) in the class (\ref{1.7}), 
entirely analogous to that following from \cite{CuTu}.  Description of, and some further comments on the
results from \cite{Hi2}, \cite{GaSi1} and \cite{CuTu} can also
be found in \cite{GaNe2}. 
\par
Finally, among the many other works studying general
qualitative properties of the problem
(\ref{1.1a})--(\ref{1.1d}), we wish to mention e.g.~the papers
\cite{DeKrNe1}, \cite{DeKrNe2} (by Deuring, Kra\v{c}mar and
Ne\v{c}asov\'a), \cite{Fa2} (by Farwig), \cite{FaKrNe} (by
Farwig, Krbec and Ne\v{c}asov\'a), \cite{Ga2}, \cite{Ga3}, (by
Galdi), \cite{GaSi2} (by Galdi and Silvestre), \cite{GeHeHi}
(by Geissert, Heck and Hieber), \cite{Hi3}, \cite{Hi1} (by
Hishida) and \cite{HiSh} (by Hishida and Shibata).

\vspace{4pt} \noindent
{\bf \ref{Se1}.2. Steady-state solution and  perturbation equations.} \ We further suppose that $\Omega$ is (an
exterior) domain in $\R^3$ with a $C^{2+\mu}$ boundary
$\partial\Omega$, for some $\mu\in(0,1)$,  and let $\bfU,\Pi$ be velocity and pressure field of a steady-state solution to
problem (\ref{1.1a})--(\ref{1.1d}) with the following properties: there
exists $r_0\in(1,3)$ such that
\begin{equation}
\nabla\bfU\in L^{r_0}(\Omega)^{3\times 3}\cap L^3(\Omega)^{3\times
3} \label{1.3}
\end{equation}
and there exist $\cn01\cc01,\, \cn02\cc02>0$ such that
\begin{equation}
|\bfU(\bfx)|\leq\cc01, \qquad
|\nabla\bfU(\bfx)|\leq\frac{\cc02}{1+|\bfx|} \qquad \mbox{for}\
\bfx\in\Omega. \label{1.4}
\end{equation}
The existence of a steady-state solution with these properties for a
large class of body forces $\bff$ follows e.g.~from
\cite[Sec.~XI.6]{Ga1} (for $u_{\infty}\not=0$) and
\cite[Sec.~XI.7]{Ga1} or \cite{Ga3} (for $u_{\infty}=0$ and
$\omega$ ``sufficiently small'').

As we are mainly interested in instability of solution $(\bfU,\Pi$, it is useful to write the solutions of
(\ref{1.1a})--(\ref{1.1d}) in the form $\bfu=\bfU+\bfv$,
$p=\Pi+q$, where the perturbations $\bfv$ and $q$ satisfy the
equations
\begin{align}
\partial_t\bfv-(\omega\br\bfe_1\times\bfx+u_{\infty}\bfe_1)\cdot
\nabla\bfv & +\omega\br\bfe_1\times\bfv+\bfU\cdot\nabla\bfv+
\bfv\cdot\nabla\bfU+\bfv\cdot\nabla\bfv \nonumber \\
&=\ -\nabla q+\Delta\bfv, \label{1.6a} \\ \noalign{\vskip 2pt}
\div\bfv\ &=\ 0 \label{1.6b}
\end{align}
in $\Omega\times(0,\infty)$, and the conditions
\begin{alignat}{5}
\bfv(\bfx,t)\ &=\ && \bfzero \hspace{12mm} &&
\mbox{for}\ \bfx\in\partial\Omega, \label{1.6c} \\
\noalign{\vskip 2pt}
\bfv(\bfx,t)\ &\to\ && \bfzero && \mbox{for}\ |\bfx|\to\infty.
\label{1.6d}
\end{alignat}

\vspace{4pt} \noindent
{\bf \ref{Se1}.3. On steady-state stability.} \ Although
stability is not the main subject of this paper, we would like to recall some
corresponding relevant works. Most
of them concern the case $\omega=0$, which describes 
the situation when $\calB$ does not rotate and just
moves  with the translational velocity
$u_{\infty}\bfe_1$. The time--decay of perturbations of
solution $(\bfU,\Pi)$ in appropriate norms and under various
conditions of smallness imposed on $\bfU$, was proven in the
works \cite{Hey2}, \cite{Hey3} (by Heywood), \cite{Mas} (by
Masuda) and further on in a series of other papers, see
e.g.~\cite{GaNe2} for a detailed list of corresponding
references. The assumption of ``sufficient smallness'' on
$\bfU$ is avoided in the papers \cite{Ne3},
\cite{DeNe}, \cite{De}, \cite{Ne5} and \cite{Saz}. It is
worth remarking that, in these last four papers, the stability of
 $(\bfU,\Pi)$ is shown to be determined just by the location 
of the eigenvalues of the associated relevant linear operator $\cL$,
disregarding the presence of an essential spectrum $\Spess(\cL)$, which is
non-empty and touches the imaginary axis from the left at point
$0$. The stability of a steady-state solution $(\bfU,\Pi)$ in the
case when $\omega\not=0$, under some conditions of smallness of
$\bfU$, was proved in the papers \cite{GaSi1}, \cite{HiSh} (in the
case $u_{\infty}=0$) and \cite{Sh3}. Sufficient conditions for
stability of solution $\bfU$ without the condition of smallness
of $\bfU$ have been derived in \cite{GaNe1}. Here, in analogy
with \cite{Ne3}, the authors use the assumption of an
appropriate time--decay property of the semigroup $\rme^{\cL t}$,
applied to a finite family of certain functions.

\vspace{4pt} \noindent
{\bf \ref{Se1}.4. Brief overview of available methods on spectral
instability.} \ In view of their intrinsic pertinence to our main finding,
we wish to recall some of the most relevant and currently available
methods on instability of steady solutions to various
evolutionary differential equations.

The classical result of Coddington and Levinson \cite{CoLe}
concerns the equation
\begin{equation}
\frac{\rmd\bfx}{\rmd t}\ =\ \cL\bfx+\cN(\bfx,t) \label{1.5}
\end{equation}
in $\R^n$, where $\cL$ is a real $n\times n$ matrix and
$|\cN(\bfx,t)|=o(|\bfx|)$ for $|\bfx|\to 0$ uniformly with
respect to $t\in(0,\infty)$. Theorem XIII.1.2 in \cite{CoLe}
says that the zero solution $\bfx(t)\equiv\bfzero$ is unstable
under the assumption that matrix $\cL$ has at least one
eigenvalue with positive real part. This classical result was successively generalized by different authors to systems of PDE's, by regarding (\ref{1.5}) as an abstract evolution 
equation in Hilbert or Banach spaces; see  the books \cite{DaKr}, \cite{He} and the
papers \cite{Ki1}, \cite{ShaStr}.  In these works it is assumed that $\cL$ and $\cN$ are  appropriate linear and
nonlinear operators, respectively, and that the spectrum of $\cL$ has an nonempty
intersection with the half plane $\{\lambda\in\C;\ \Re\,
\lambda>0\}$. Concerning further properties requested on $\cL$, we begin to recall that Daleckij and
Krejn \cite{DaKr} considered equation (\ref{1.5}) in a Banach
space $\bfX$ and assumed  $\cL$   bounded and closed
 in $\bfX$. (See \cite[Theorem VII.2.3]{DaKr}.)
Kielh\"ofer \cite{Ki1} studied  (\ref{1.5}) as an
equation in a Hilbert space $\bfH$ and assumed that $\cL=A+M$,
where $A$ is a (linear) selfadjoint and positive definite
 with compact inverse, while $M$ satisfies $D(M)\supset D(A^{\beta})$ for
some $\beta\in[0,\frac{1}{2})$ and $\|M\bfu\|\leq c\,
\|A^{\beta}\bfu\|$ for $\bfu\in D(A^{\beta})$. Henry \cite{He}
considered  (\ref{1.5}) as an equation in a Banach
space $\bfX$ and assumed that $\cL$ is a sectorial operator in
$\bfX$. (See Theorem 5.3.1 in \cite{He}.) It should be remarked that, directly or indirectly, in all works \cite{DaKr,Ki1,He} the operator $\cL$ is supposed to be the generator of an analytic semigroup.   Particularly significant, in this sense, becomes then the contribution by Shatah and Strauss 
\cite{ShaStr} who only require $\cL$ to be the generator of a $C_0$--semigroup in the Banach space $\bfX$ where (\ref{1.5}) is studied.
Concerning the assumptions on the operator $\cN$, it must be emphasized that in all the mentioned works \cite{DaKr},
\cite{Ki1}, \cite{He} and \cite{ShaStr}, it is supposed
that, in a neighborhood of 0,  $\cN(\bfx,t)$ is ``suficiently small" compared to $\bfx$
 in the  norm of the space $\bf X$, with respect to which 
instability is investigated, a condition that is {\em not} met by the problem studied in this paper.
\par
In addition to these general results, we would like to mention also those proved directly for Navier-Stokes equations. The problem
(\ref{1.1a})--(\ref{1.1d}) with $\omega=u_\infty=0$ was studied in a
bounded domain $\Omega\subset\R^3$ by Sattinger \cite{Sat}. The
author reformulated the question of stability and 
instability of a steady-state solution as the same question
concerning the zero solution of an equation of the type
(\ref{1.5}) in $\Ls$ (see subsection 2.1 for the definition of
this function space). Moreover, he showed that the operator $\cL$ has a
compact resolvent and proved, in particular,  instability in the norm of $\Ls$ under the assumption that
some of the eigenvalues of $\cL$ have positive real part. An
analogous result can be found in the book \cite{Yu} by Yudovich
and in the paper \cite{FriStrVi} by Friedlander et al., who
proved the instability of a steady flow in an $n$--dimensional
finite domain in the norm of $\bfH^s(\Omega)$ for
$s>\frac{1}{2}n+1$. Sazonov \cite{Saz} treated  problem
 (\ref{1.1a})--(\ref{1.1d}) with $\omega=0$ in an
exterior domain $\Omega\subset\R^3$. He proved the instability
of a steady-state flow in the norm of $\bfL^3(\Omega)$, assuming that
at least one eigenvalue of the associated linearized operator
operator $\cL$ has positive real part and that $\bf U(x)$ has suitable summability properties for large $|x|$. This question has
been reconsidered independently by Friedlander {\em et al.}~in \cite{FriPaSh},
where the authors deal with a steady flow $\bfU$ in a bounded or
unbounded domain $\Omega$ of $\R^n$, assuming that $\bfU$ has
derivatives of all orders bounded in $\Omega$ and the
associated linear operator $\cL$ has a part of the spectrum in
the right half-plane of $\C$. Instability is proven in the
norm of $\bfL^r(\Omega)$ for any $r\in(1,\infty)$. We wish to emphasize that for the proof of all the above results it is {\em crucial} that the operator $\cL$  be  the generator of an {\em analytic} semigroup. 

\vspace{4pt} \noindent
{\bf \ref{Se1}.5. Main results of this paper.} \ We treat 
problem (\ref{1.1a})--(\ref{1.1d}) in an exterior domain
$\Omega$, and, in contrast to \cite{Saz} and \cite{FriPaSh}, we
consider the case $\omega\not=0$. This implies that the semigroup
$\rme^{\cL t}$ generated by the associated linear operator
$\cL$ is {\em no longer analytic} but only of  class $C_0$ (see
subsection 2.5), which forces us to employ  different
estimates and technique than those, for example, of \cite{Saz} or \cite{FriPaSh}.
We prove the instability of solution $(\bfU,\Pi)$ in the norms of
$\bfL^2(\Omega)$ and $\bfW^{1,2}(\Omega)$, under the assumption
that $\cL$ has at least one eigenvalue in the
half-plane $\{\lambda\in\C;\ \Re\, \lambda>0\}$. (Although the
operator $\cL$ has a non--empty essential spectrum, we show
that the right half-plane in $\C$ may contain only eigenvalues
of $\cL$ and the number of eigenvalues with real parts
exceeding any given $\xi>0$ is finite, see subsection
\ref{Se2}.8.) It is necessary to stress that none of the abstract general results
from \cite{DaKr}, \cite{Ki1}, \cite{He} or \cite{ShaStr} can be
directly applied to our problem. In addition to the fact that
our operator $\cL$ does not satisfy the assumptions from
\cite{DaKr}, \cite{Ki1} or \cite{He}, the main reason is that
the nonlinear operator $\cN$ is not bounded in the same space
in which the stability or instability is considered. (See
subsection \ref{Se2}.2.) The statement on the instability is
formulated in Theorem \ref{T2}.

As an important auxiliary result, we also prove that the uniform growth
bound $\gamma(\rme^{\cL t})$ of the $C_0$--semigroup $\rme^{\cL
t}$, as a semigroup in $\Ls$, is equal to the spectral bound
$s(\cL)$ of operator $\cL$. We wish to point out that, as is well known,  such an
equality does not generally hold for $C_0$--semigroups and the
question under which additional conditions the equality holds
belongs to the most interesting and challenging problems in the
theory of the $C_0$--semigroups. The result is formulated in
Theorem~\ref{T1}.

\section{The associated linear and nonlinear operators and the
operator form of (\ref{1.6a})--(\ref{1.6d})} \label{Se2}

\noindent
{\bf \ref{Se2}.1. Notation.} \ We denote vector functions and
spaces of vector functions by boldface letters. We also denote
by $c$ a generic constant whose value may change from line
to line. On the other hand, $c$'s with indices denote constants
with fixed values.

\begin{list}{$\circ$}
{\setlength{\topsep 0pt}
\setlength{\itemsep 0pt}
\setlength{\leftmargin 14pt}
\setlength{\rightmargin 0pt}
\setlength{\labelwidth 10pt}}

\item
For $R>0$, we set $\Omega_R:=\Omega\cap B_R(\bfzero)$ and
$\Omega^R:=\Omega\cap \{\bfx\in\R^3;\ |\bfx|>R\}$.

\item
For $1<r\leq\infty$ and $k\in\N$, we denote by $\|\, .\, \|_r$ or
$\|\, .\, \|_{k,r}$ the norm of  scalar-- , vector-- or 
tensor--valued function with components in $L^r(\Omega)$ or
$W^{k,r}(\Omega)$, respectively. If the norm is considered on
another domain than $\Omega$ then we use. e.g.,~the notation $\|\,
.\, \|_{r;\, \Omega_R}$, etc.

\item
$|\, .\, |_{1,r}:=\|\nabla\br.\, \|_r$ and $|\, .\,
|_{2,r}:=\|\nabla^2\br.\, \|_r$,

\item
$(\, .\, ,\, .\, )_2$ is the scalar product in $\bfL^2(\Omega)$.

\item
$\Cs$ denotes the space of infinitely differentiable
divergence--free vector functions with a compact support in
$\Omega$. For $1<r<\infty$ and $k\in\{0\}\cup\N$, we denote by
$\bfL^r_{\sigma}(\Omega)$, respectively
$\bfW^{k,r}_{0,\sigma}(\Omega)$, the closure of $\Cs$ in
$\bfL^r(\Omega)$, respectively in $\bfW^{k,r}_0(\Omega)$.

\item
The orthogonal projection of $\bfL^2(\Omega)$ onto $\Ls$ is
denoted by $\Ps$.

\item
$\calL(\Ls)$ denotes the space of bounded linear operators in
$\Ls$.

\item
We set $A\bfphi:=\Ps\Delta\bfphi$ for $\bfphi\in D(A):=
\Wns\cap\bfW^{2,2}(\Omega)$. The operator $A$ (Stokes
operator) is non--positive and selfadjoint in $\Ls$. Its
domain $D(A)$ is a Banach space with the norm $\|A\, .\, \|_2+\|\,
.\, \|_2$. Further, for $\bfphi\in D(A)$ we define
\begin{align*}
B^0\bfphi\ &:=\ (\bfe_1\times\bfx)\cdot\nabla
\bfphi-\bfe_1\times\bfphi, \\ \noalign{\vskip 1pt}
B^1\bfphi\ &:=\ \partial_1\bfphi, \\ \noalign{\vskip 1pt}
B^2\bfphi\ &:=\ -\Ps\bigl[\bfU\cdot\nabla\bfphi+
\bfphi\cdot\nabla\bfU\bigr]
\end{align*}
By using (\ref{1.3}) and (\ref{1.4}), one
can easily verify that the range of $B^2$ is  in $\Ls$. In fact, 
 in \cite{FaNe2} or \cite{Ne3} it is shown that $B^1\bfphi$ is
 in $\bfW^{1,2}(\Omega)\cap\Ls$ for $\bfphi\in
D(A)$, hence $B^1$ can also be considered to be an operator in
$\Ls$. Also, it is clear that $B^0\bfphi\in \bfW^{1,2}_{loc}(\Omega)\cap L^2_\sigma(\Omega)$. 
\par
We next define $$D(\cL^0)\equiv
D(\cL):=\{\bfphi\in D(A);\ (\omega\br\bfe_1\times
\bfx)\cdot\nabla\bfphi\in\bfL^2(\Omega)\}\,,$$ with operators $\mathcal L_0$ and $\mathcal L$ given by
\begin{alignat*}{3}
& \cL^0\bfphi\ &&:=\ \nu A\bfphi+\omega\br B^0\bfphi+u_{\infty}\br
B^1\bfphi, \\ \noalign{\vskip 1pt}
& \cL\bfphi\ &&:=\ \nu A\bfphi+\omega\br B^0\bfphi+u_{\infty}\br
B^1\bfphi+B^2\bfphi.
\end{alignat*}
The symmetric part $\cL_s$ and the skew--symmetric part (=
anti-symmetric part) $\cL_a$ of the operator $\cL$ are
\vspace{-8pt}
\begin{displaymath}
\cL_s=\nu A+B^2_s, \qquad \cL_a=\omega\br B^0+u_{\infty}\br
B^1+B^2_a,
\end{displaymath}
where
\vspace{-8pt}
\begin{align*}
B^2_s\bfphi\ &=\ -\Ps\bigl[\bfphi\cdot(\nabla\bfU)_s\bigr], \\
\noalign{\vskip 2pt}
B^2_a\bfphi\ &=\ -\Ps\bigl[\bfU\cdot\nabla\bfphi+\bfphi\cdot
(\nabla\bfU)_a\bigr].
\end{align*}

\item
Finally, we denote by $\cN$ the operator defined as follows:
\vspace{-5pt}
\begin{displaymath}
\cN\bfphi\ :=\ -\Ps(\bfphi\cdot\nabla\bfphi) \qquad (\mbox{for}\
\bfphi\in D(A)).
\end{displaymath}

\end{list}

\vspace{4pt} \noindent
{\bf \ref{Se2}.2. Important inequalities.} \ Conditions
(\ref{1.3}), (\ref{1.4}) and Sobolev's inequality (see
e.g.~\cite[p.~54]{Ga1}) imply that $\bfU\in\bfL^a(\Omega)$ for
all $3r_0/(3-r_0)\leq a<\infty$.
By using the latter along with (\ref{1.3}), (\ref{1.4}), and  H\"older and Sobolev
inequalities, one shows that  $B^1$, $B^2$ and $\cN$ satisfy the following bounds for all $\bfphi\in D(A)$ $\cn62\cn63$
\begin{alignat}{3}
& \| B^1\bfphi\|_2\ &&\leq\ |\bfphi|_{1,2}, \label{2.5a} \\
\noalign{\vskip 2pt}
& \| B^2_s\bfphi \|_2+\| B^2_a\bfphi \|_2\ &&\leq\ \cc62\,
|\bfphi|_{1,2}, \label{2.5b} \\ \noalign{\vskip 2pt}
& \|\cN\bfphi\|_2\ &&\leq\ \cc63\, \|A\bfphi\|_2^{1/2}\,
|\bfphi|_{1,2}^{3/2}. \label{2.5f}
\end{alignat}
It is proven in
\cite{GaNe1} that the operator $B^2$ is relatively
$A$--compact, relatively $(\nu A+\omega\br B^0)$--compact and
relatively $\cL^0$--compact in the space $\Ls$.

\vspace{4pt} \noindent
{\bf \ref{Se2}.3. An operator form of equations (\ref{1.6a}),
(\ref{1.6b}).} \ Applying standard arguments, one can show that
the system of equations (\ref{1.6a}), (\ref{1.6b}) can be
written as an operator equation
\begin{equation}
\frac{\rmd\bfv}{\rmd t}\ =\ \cL\bfv+\cN\bfv \label{2.2}
\end{equation}
in the space $\Ls$. Here and further on, we mostly consider $\bfv$
to be a function of one variable $t$ with values in an appropriate
function space. (This justifies writing the derivative with
respect to time as $\rmd\bfv/\rmd t$ and not $\partial_t\bfv$.)

From the results of papers \cite{GaSi3} and \cite{CuTu} one deduces that for a given $\bfv_0\in\Wns$ there exists $T\in(0,\infty]$ and a solution $\bfv$
of equation (\ref{2.2}) on the time interval $(0,T)$ such that
$\bfv(0)=\bfv_0$ and
\begin{equation}
\begin{aligned}[2] & \bfv\in L^2\bigl(0,T_0;\, D(A)\bigr)\cap
C\bigl([0,T_0);\, \Wns\bigr), \\ & \frac{\rmd\bfv}{\rmd\br
t}-\omega\br B^0\bfv-u_{\infty}\br B^1\bfv\in L^2\bigl(0,T_0;\,
\Ls\bigr) \end{aligned} \label{2.3}
\end{equation}
for each $T_0\in[0,T)$. Moreover, if $T<\infty$ then
$\|\bfv(t)\|_{1,2}\to\infty$ for $t\to T-$. Note that such a
solution is not classical, but it is more than just a mild
solution. The main reason is that,  due to (\ref{2.3}),
equation (\ref{2.2}) makes sense, as an equation in $\Ls$, for
a.a.~$t\in(0,T)$.

\vspace{4pt} \noindent
{\bf \ref{Se2}.4. Spectra of the operators $\cL^0$ and $\cL$.}
\ Recall that a closed densely defined operator $S$ in a Banach
space $\bfX$ is said to be {\it Fredholm} if its range is
closed and both $\nul(S)$ (the nullity of $S$) and $\deff(S)$
(the defficiency of $S$) are finite. Operator $S$ is called
{\it semi-Fredholm} if its range is closed and at least one of
the numbers $\nul(S)$, $\deff(S)$ is finite. According to the
deffinition from \cite[p.~243]{Ka}, the {\it essential
spectrum} of $S$ is the set of those $\lambda\in\C$, for which
the operator $S-\lambda I$ is not semi-Fredholm. We denote by
$\Spe(S)$ the whole spectrum, by $\Spep(S)$ the point spectrum
and by $\Spess(S)$ the essential spectrum of $S$. Note that one
can also find a different definition of the essential spectrum
in the literature, according to which $\lambda$ belongs to the
essential spectrum of $S$ if the operator $S-\lambda I$ is not
Fredholm. (See e.g.~the footnote on p.~243 in \cite{Ka} or
\cite{Sch1}.) In order to distinguish between the two
definitions, we denote by $\SpessEN(S)$ the essential spectrum
of operator $S$, satisfying the second definition. (We shall
also denote by tilde some other quantities, related to the
second definition of the essential spectrum.) Obviously,
$\Spess(S)\subset\SpessEN(S)$. Recall that while $\Spess(S)$ is
preserved under relatively compact additive perturbations of
$S$, see \cite[Theorem IV.5.35]{Ka}, $\SpessEN(S)$ is preserved
under compact additive perturbations of $S$, see
\cite[p.~248]{EnNa} or \cite[Corollary 2.2]{Sch1}.

The next two formulas provide a characterization of the essential
spectrum of operator $\cL^0$; see \cite{FaNe1,FaNe2}:
\begin{equation}
\begin{aligned}
\Spess(\cL^0)&=\bigl\{\lambda=\alpha+\rmi\br k\omega\in\C;\
\alpha\leq 0,\ k\in\Z\bigr\} && \mbox{if}\ u_{\infty}=0, \\
\noalign{\vskip 0pt}
\Spess(\cL^0)&=\Bigl\{\lambda=\alpha+\rmi\beta+\rmi\br
k\omega\in\C;\ \alpha,\beta\in\R,\ k\in\Z,\
\alpha\leq-\frac{\nu\beta^2} {u_{\infty}^2}\Bigr\} \hspace{10pt}
&& \mbox{if}\ u_{\infty}\not=0.
\end{aligned} \label{2.10}
\end{equation}
We observe that if $u_{\infty}=0$ then $\Spess(\cL^0)$ is a
union of infinitely many equidistant half-lines parallel to the
real axis. If $u_{\infty}\not=0$ then $\Spess(\cL^0)$ consists
of a union of equally shifted filled in parabolas in the left
half-plane of $\C$. Similar results were obtained in
\cite{FaNe3} and \cite{FaNeNe} in the general $L^q$--setting.
It follows from \cite[Theorem 1.2]{FaNe3} that all
$\lambda\in\Spe(\cL^0)\smallsetminus\Spess(\cL^0)$ are isolated
eigenvalues of $\cL$ with negative real parts and finite
algebraic multiplicities. (This set may also be empty.) Thus,
since $\ind(\cL^0-\lambda I)\equiv \nul(\cL^0-\lambda
I)-\deff(\cL^0-\lambda I)$ is constant in
$\C\smallsetminus\Spess(\cL^0)$ (by \cite[Theorem
IV.5.17]{Ka}), we deduce that $\nul(\cL^0-\lambda
I)=\deff(\cL^0-\lambda I)<\infty$ for all
$\lambda\in\C\smallsetminus\Spess(\cL^0)$. Thus, $\cL^0-\lambda
I$ is a Fredholm operator for all
$\lambda\in\C\smallsetminus\Spess(\cL^0)$. Consequently,
$\Spess(\cL^0)=\SpessEN(\cL^0)$.

As the operators $\cL$ and $\cL^0$ differ only by operator
$B^2$, which is relatively $\cL^0$--compact, we have
$\Spess(\cL)=\Spess(\cL^0)$ (by \cite[Theorem IV.5.35]{Ka}).
Moreover, the operator $\cL-\alpha I$ is dissipative for large
positive $\alpha$, which can be easily verified by means of
estimate (\ref{2.5b}). Hence all $\lambda\in\C$ with positive
and sufficiently large real parts belong to the resolvent set
of $\cL$. Since the open set $\C\smallsetminus\Spess(\cL)$ has
just one component, we deduce by means of the same arguments as
in the previous paragraph that the set
$\Spe(\cL)\smallsetminus\Spess(\cL)$, if it is not empty,
consists of an at most countable family of isolated eigenvalues
of $\cL$ with finite algebraic multiplicities (which can
possibly cluster only at points of $\Spess(\cL)$) and
$\nul(\cL-\lambda I)=\deff(\cL-\lambda I)<\infty$ for all
$\lambda\in\C\smallsetminus\Spess(\cL)$. Hence $\cL-\lambda I$
is a Fredholm operator for all $\lambda\in\C\smallsetminus
\Spess(\cL)$ and $\Spess(\cL)=\SpessEN(\cL)$.

Thus, we observe that the operators $\cL^0$ and $\cL$ have the
same essential spectra described by (\ref{2.10}), no matter
which one of the two aforementioned definitions of the essential
spectrum we use. Moreover, recall that
$\Spe(\cL^0)\smallsetminus\Spess(\cL^0)$ and
$\Spe(\cL)\smallsetminus\Spess(\cL)$ can also contain isolated
eigenvalues with finite algebraic multiplicities, which may
cluster in $\C$ only at the boundary of the essential spectrum.
While all such eigenvalues of $\cL^0$ have negative real parts,
the eigenvalues of $\cL$ may also lie in the half-plane
$\{\lambda\in\C;\; \Re\, \lambda>0\}$.

\vspace{4pt} \noindent
{\bf \ref{Se2}.5. Semigroups generated by the operators
$\cL^0$ and $\cL$.} \ If $\omega=0$ then operator $\cL^0$
generates an analytic semigroup $\rme^{\cL^0t}$ in $\Ls$  for $u_{\infty}=0$ (e.g.~\cite{Mi1}), and  for
$u_{\infty}\not=0$ \cite{Ne3}. However, if $\omega\not=0$ (which we
assume) then the same operator generates only a
$C_0$--semigroup in $\Ls$, see \cite{Hi2} or \cite{GeHeHi} for
$u_{\infty}=0$ and \cite{Sh2} for $u_{\infty}\in\R$. As showed in
 \cite{GaNe1},  the operator $\cL$ generates a
$C_0$--semigroup also in $\Ls$. (This is obtained by
a relatively easy application of the Lumer--Phillips theorem.)
Both semigroups $\rme^{\cL^0t}$ and $\rme^{\cL t}$ are
strongly continuous, but none of them is eventually uniformly
continuous (i.e.~continuously dependent on $t$ in the topology
of $\calL\bigl(\Ls\bigr)$ in the interval $(t_0,\infty)$ for
some $t_0>0$). In this regard, we recall that the necessary condition for the
eventual uniform continuity is that the intersection of the
spectrum of the generator with each half-plane
$\{\lambda\in\C;\ \Re\, \lambda>b\}$ (for any $b\in\R$) is
bounded, see \cite[Theorem 4.4.18]{EnNa}. Here, however, one
can see from the shapes of $\Spe(\cL^0)$ and $\Spe(\cL)$ that
this necessary condition is not fulfilled.

\vspace{4pt} \noindent
{\bf \ref{Se2}.6. Some estimates of the semigroup
$\rme^{\cL^0t}$.} \ Although we need just the $L^2$--$L^2$
estimates of $\rme^{\cL^0t}$ and $\nabla\rme^{\cL^0t}$ in this
paper, we recall, for completeness, the more general
$L^r$--$L^s$ estimates, satisfied for
$\bfphi\in\bfL^s_{\sigma}(\Omega)$ and $t>0$: $\cn64\cn65$
\begin{alignat}{5}
& \|\rme^{\cL^0t}\bfphi\|_r\ && \leq\ \cc64\, t^{\frac{3}{2}\,
\left( \frac{1}{r}-\frac{1}{s} \right)}\, \|\bfphi\|_s &&
\mbox{for}\ 1<s\leq r<\infty, \label{2.11} \\ \noalign{\vskip 2pt}
& |\rme^{\cL^0t}\bfphi|_{1,r}\ && \leq\ \cc65\,
t^{-\frac{1}{2}+\frac{3}{2}\, \left( \frac{1}{r}-\frac{1}{s}
\right)}\, \|\bfphi\|_s \hspace{15pt} && \mbox{for}\ 1<s\leq r\leq
3. \label{2.12}
\end{alignat}
These inequalities are proved in \cite{HiSh}  for the case $u_{\infty}=0$, 
and in \cite{Sh2}  for the case $u_{\infty}\not=0$. Moreover, one can
also deduce from \cite[Theorem 1.1]{Sh4} that there exists
$\rho>0$ such that $\cn66$
\begin{equation}
|\rme^{\cL^0t}\bfphi|_{2,2}\ \leq\ \frac{\cc66}{t}\ \rme^{\rho t}\
\|\bfphi\|_2 \label{2.12a}
\end{equation}
for $\bfphi\in\Ls$ and $t>0$.

\vspace{4pt} \noindent
{\bf \ref{Se2}.7. The uniform growth bound of a general
$C_0$--semigroup.} \ Assume, for a while, that $\cT=\cT(t)$ is
a general $C_0$--semigroup in a Banach space $\bfX$ (with the
norm $\|\, .\, \|$), generated by operator $S$. The {\it
spectral bound} $s(S)$ of $S$ and the {\it uniform
growth bound} $\gamma(\cT)$ of the semigroup $\cT$ are defined
by the formulas
\begin{align*}
s(S)\ &:=\ \sup\, \bigl\{ \Re\, \lambda;\ \lambda\in\Spe(S)
\bigr\}, \\ \noalign{\vskip 2pt}
\gamma(\cT)\ &:=\ \inf\, \bigl\{\mu\in\R;\ \exists\br M_{\mu}>0\ \
\forall\br t>0\ \ \forall\br\bfx\in\bfX\ :\ \|\cT(t)\bfx\|\leq
M_{\mu}\, \rme^{\mu t}\, \|\bfx\| \bigr\},
\end{align*}
respectively. It is known that generally $s(S)\leq\gamma(\cT)$.
The question of ``under which conditions the equality
$s(S)=\gamma(\cT)$ holds'' is a classical prolem in
the theory of $C_0$--semigroups. It is known that $s(S)=
\gamma(\cT)$ if the semigroup $\cT$ satisfies the spectral
mapping theorem, i.e.~if $\Spe(\cT(t))\smallsetminus\{0\}=
\exp\bigl(t\, \Spe(S)\bigr)$ holds for some $t>0$. (See
\cite[Proposition 1.2.2]{JvN} or \cite[Proposition
2.2.2]{EnNa}.) However, while the identities
$\Spep(\cT(t))\smallsetminus \{0\}=\exp\bigl(t\,
\Spep(S)\bigr)$ and
$\Sper(\cT(t))\smallsetminus\{0\}=\exp\bigl(t\, \Sper(S)\bigr)$
hold for the point spectrum and the residual spectrum (see
Theorems 2.1.2 and 2.1.3 in \cite{JvN} or \cite[Theorem
IV.3.7]{EnNa}), the approximate point spectrum generally
satisfies only the inclusion $\exp\bigl(t\,
\Spea(S)\bigr)\subset\Spea(\cT(t))\smallsetminus\{0\}$. The
identity $\exp\bigl(t\, \Spea(S)\bigr)=\Spea(\cT(t))
\smallsetminus\{0\}$ (which implies the validity of the
spectral mapping theorem and consequently also the equality
$s(S)=\gamma(\cT)$), is satisfied e.g.~if the semigroup $\cT$
is eventually uniformly continuous and, in particular,  if it
is analytic. (See \cite[Theorem IV.3.10]{EnNa} or \cite[Theorem
2.3.2]{JvN}.)

Further in this subsection, we present some useful notions and
assertions, which are described and discussed in greater detail
e.g.~in \cite[pp.~249--258]{EnNa}. The {\it essential spectral
radius} of the semigroup $\cT$ at time $t$ is defined by the
formula
\begin{displaymath}
\widetilde{r}_{\rm ess}(\cT(t))\ :=\ \sup\, \bigl\{ |\lambda|;\
\lambda\in\SpessEN(\cT(t))\bigr\}.
\end{displaymath}
The essential spectral radius of $\cT(t)$ can also be
characterized as the infimum of the set of $\rho>0$ such that the
implication ``$\zeta\in\Spe(\cT(t)),\ |\zeta|>\rho\
\Longrightarrow\ \zeta$ is an eigenvalue of $\cT(t)$ with a finite
algebraic multiplicity" holds. The {\it essential growth bound} of
the semigroup $\cT$ is defined to be
\begin{displaymath}
\widetilde{\gamma}_{\rm ess}(\cT)\ :=\ \frac{1}{t_0}\ \ln\,
\widetilde{r}_{\rm ess}(\cT(t_0)) \qquad (\mbox{for any}\ t_0>0).
\end{displaymath}
(One can verify that the right hand side is independent of $t_0$
for $t_0>0$.) The uniform growth bound of the semigroup $\cT$
satisfies the important formula $\gamma(\cT)=\max\{
\widetilde{\gamma}_{\rm ess}(\cT);\ s(S) \}$.

\vspace{4pt} \noindent
{\bf \ref{Se2}.8. The uniform growth bounds of the semigroups
$\rme^{\cL^0t}$ and $\rme^{\cL t}$.} \ Our main objective in
this subsection is to prove the identities
$\gamma(\rme^{\cL^0t})= s(\cL^0)=0$ and $\gamma(\rme^{\cL
t})=s(\cL)$.

It follows immediately from formulas (\ref{2.10}) and from the
inclusion $\Spe(\cL^0)\subset\{\lambda\in\C;\ \Re\, \lambda\leq
0\}$ that $s(\cL^0)=0$. Inequality (\ref{2.11}) (which implies
$\gamma(\rme^{\cL^0t})\leq 0$) and the general inequality
$\gamma(\rme^{\cL^0t})\geq s(\cL^0)$ yield
$\gamma(\rme^{\cL^0t})=0$. Furthermore, $\widetilde{r}_{\rm
ess}(\rme^{\cL^0t})\geq 1$ for each $t>0$, because
$\widetilde{r}_{\rm ess}(\rme^{\cL^0t})$ is the infimum of the
set of $\rho>0$ such that $\{\zeta\in\Spe(\rme^{\cL^0t});\
|\zeta|>\rho\}$ consists of isolated eigenvalues of
$\rme^{\cL^0t}$ with finite algebraic multiplicities, and each
set $\{\zeta\in\Spe(\rme^{\cL^0t});\ |\zeta|>\rho\}$ (for
$0<\rho<1$) contains $\{\rme^{\lambda t};\
\lambda\in\Spe(\cL^0),\ \ln\rho<\Re\, \lambda\}$ (due to the
spectral inclusion theorem), which is not isolated. On the
other hand, since $\widetilde{\gamma}_{\rm
ess}(\rme^{\cL^0t})\leq \gamma(\rme^{\cL^0t})=0$, we obtain
$\widetilde{\gamma}_{\rm ess}(\rme^{\cL^0t})=0$. We have proven
that
\begin{equation}
s(\cL^0)\ =\ \gamma(\rme^{\cL^0 t})\ =\ \widetilde{\gamma}_{\rm
ess}(\rme^{\cL^0t})\ =\ 0. \label{2.14}
\end{equation}

Let us now focus on operator $\cL$ and the semigroup $\rme^{\cL
t}$. If $\omega=0$ then $s(\cL)= \gamma(\rme^{\cL t})$, because
the semigroup $\rme^{\cL t}$ is analytic. The validity of the same
equality in the case $\omega\not=0$ is, however, a subtler
problem. Nevertheless, since $\cL=\cL^0+B^2$, the semigroup
$\rme^{\cL t}$ satisfies the variation of parameters formula
\begin{equation}
\rme^{\cL t}\ =\ \rme^{\cL^0t}+\int_0^t\rme^{\cL\tau}\, B^2\,
\rme^{\cL^0(t-\tau)}\; \rmd\tau \label{2.16}
\end{equation}
for $t\geq 0$, see e.g.~\cite[p.~161]{EnNa}. (The formula has
been in fact derived in \cite{EnNa} under the assumption that
operator $B^2$ is bounded, but using the ``smoothing''
properties of $\rme^{\cL^0t}$ following from (\ref{2.12}) and
(\ref{2.12a}), and applying the same arguments as in
\cite{EnNa}, one also obtains (\ref{2.12}) for the concrete
unbounded operator $B^2$ we deal with.) The integral on the
right hand side of (\ref{2.16}) converges in the topology of
$\calL(\Ls)$, because
\begin{align*} \bigl\| \rme^{\cL\tau}\, B^2\,
\rme^{\cL^0(t-\tau)}\bfphi \bigr\|_2\ &\leq\ M_{\mu}\,
\rme^{\mu\tau}\ \|B^2\, \rme^{\cL^0(t-\tau)}\bfphi\|_2\ \leq\
M_{\mu}\, \rme^{\mu\tau}\, \cc62\,
\bigl|\rme^{\cL^0(t-\tau)}\bfphi\bigr|_{1,2} \\
&\leq\ M_{\mu}\, \rme^{\mu\tau}\,
\frac{\cc62\br\cc65}{\sqrt{t-\tau}}\ \|\bfphi\|_2
\end{align*}
for all $\bfphi\in\Ls$ and $\mu>\gamma(\rme^{\cL t})$. We shall
further need the next two lemmas:

\begin{lemma} \label{L2}
The operator $B^2\, \rme^{\cL^0(t-\tau)}$ is compact in $\Ls$ for
each $t>0$ and $0\leq\tau<t$.
\end{lemma}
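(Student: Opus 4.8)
The plan is to factor the operator $B^2\,\rme^{\cL^0(t-\tau)}$ through a bounded operator into $\bfW^{1,2}(\Omega)$ and then exploit local compactness of Sobolev embeddings together with the decay estimates of the semigroup to control the behavior near infinity. First I would fix $t>0$ and $0\le\tau<t$ and set $s:=t-\tau>0$. By the smoothing estimate (\ref{2.12}) (with $r=s=2$), the map $\bfphi\mapsto\rme^{\cL^0 s}\bfphi$ is bounded from $\Ls$ into $\bfW^{1,2}_{0,\sigma}(\Omega)$, with norm $O(s^{-1/2})$; and, more to the point, by (\ref{2.12a}) it is bounded from $\Ls$ into $\bfW^{2,2}(\Omega)\cap D(A)$ (with norm $O(s^{-1}\rme^{\rho s})$). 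So it suffices to show that $B^2$ is a compact operator from $D(A)$ (equipped with its graph norm) into $\Ls$; composing a compact operator with a bounded one yields the claim.

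Compactness of $B^2$ on $D(A)$ is essentially the relative $A$--compactness of $B^2$, which is stated in subsection \ref{Se2}.2 as being proven in \cite{GaNe1}: $B^2$ is relatively $A$--compact in $\Ls$, i.e.\ $B^2(I-A)^{-1}$ (equivalently, $B^2$ restricted to the unit ball of $D(A)$) is compact. Hence $B^2\,\rme^{\cL^0 s}=\bigl(B^2\,\text{restricted to }D(A)\bigr)\circ\rme^{\cL^0 s}$, where $\rme^{\cL^0 s}\colon\Ls\to D(A)$ is bounded by (\ref{2.12a}); the composition of a compact operator with a bounded operator is compact, so $B^2\,\rme^{\cL^0(t-\tau)}$ is compact in $\Ls$. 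For a self-contained argument one would instead recall the structure of $B^2$: writing $B^2\bfphi=-\Ps[\bfU\cdot\nabla\bfphi+\bfphi\cdot\nabla\bfU]$, one splits $\Omega=\Omega_R\cup\Omega^R$; on $\Omega_R$ the embedding $\bfW^{2,2}(\Omega_R)\hookrightarrow\bfW^{1,2}(\Omega_R)$ is compact and $\bfU$ is bounded, while on $\Omega^R$ one uses the decay (\ref{1.4}) of $\bfU$ and $\nabla\bfU$ together with Hölder and Sobolev inequalities (and the summability $\bfU\in\bfL^a(\Omega)$ for $a$ large noted in subsection \ref{Se2}.2) to make the $\bfL^2$--norm of the tail uniformly small, which upgrades to compactness by the standard $\varepsilon$--net / diagonal argument.

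The main obstacle is the behavior at spatial infinity: on an exterior domain the Rellich--Kondrachov theorem fails globally, so neither factor is compact "by soft reasons" alone. The whole point is that $B^2$'s coefficients $\bfU,\nabla\bfU$ decay, so the singular (non-compact) part of $\Omega$ contributes an arbitrarily small operator norm; this is exactly the content of relative $A$--compactness, already available from \cite{GaNe1}. A minor secondary point to check is that the factorization is legitimate — that $\rme^{\cL^0 s}$ genuinely maps $\Ls$ into $D(A)$ (not merely $D(A^{\theta})$ for some $\theta<1$); this is guaranteed precisely by estimate (\ref{2.12a}), i.e.\ the $|\cdot|_{2,2}$ bound, which is why that estimate was recorded in subsection \ref{Se2}.6. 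With these pieces in place the lemma follows immediately.
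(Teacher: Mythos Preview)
Your proposal is correct and follows essentially the same approach as the paper: factor $B^2\,\rme^{\cL^0(t-\tau)}$ as a bounded map $\Ls\to D(A)=\bfW^{2,2}(\Omega)\cap\Wns$ (via the smoothing estimates (\ref{2.11})--(\ref{2.12a})) composed with the compact map $B^2\colon D(A)\to\Ls$. The only difference is one of emphasis: your primary argument invokes the relative $A$--compactness of $B^2$ from \cite{GaNe1} as a black box, whereas the paper re-proves this compactness directly via the $\Omega_R/\Omega^R$ splitting---which is exactly the ``self-contained'' argument you also sketch.
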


\begin{proof}
Let $t>0$ and $0\leq\tau<t$ be fixed. It follows from the
inequalities (\ref{2.11})--(\ref{2.12a}) that
$\rme^{\cL^0(t-\tau)}$ is a bounded operator from $\Ls$ to
$\bfW^{2,2}(\Omega)\cap\Wns$. In order to complete the proof, we
show that $B^2$ is a compact operator from $\bfW^{2,2}(\Omega)\cap
\Wns$ to $\Ls$. Thus, let $\{\bfphi_n\}$ be a bounded sequence in
$\bfW^{2,2}(\Omega)\cap \Wns$ and $R>0$. Then
\begin{align}
\|B^2\bfphi_n-B^2\bfphi_m\|_2\ &=\ \bigl\| \bfU\cdot\nabla
(\bfphi_n-\bfphi_m)+(\bfphi_n-\bfphi_m)\cdot\nabla\bfU\bigr\|_2
\nonumber \\ \noalign{\vskip 2pt}
&\leq\ \bigl\| \bfU\cdot\nabla
(\bfphi_n-\bfphi_m)+(\bfphi_n-\bfphi_m)\cdot\nabla\bfU\bigr\|_{2;\,
\Omega_R} \nonumber \\
& \hspace{15pt} +\bigl\| \bfU\cdot\nabla(\bfphi_n-\bfphi_m)+
(\bfphi_n- \bfphi_m)\cdot\nabla\bfU\bigr\|_{2;\, \Omega^R}
\label{2.25}
\end{align}
for $m,n\in\N$. Let $\epsilon>0$ be given. Due to the assumptions
(\ref{1.3}) on function $\bfU$, there exists $R>0$ so large that
the second term on the right hand side is less than or equal to
$\epsilon/2$, independently of $m$ and $n$. Applying the compact
imbedding $\bfW^{2,2}(\Omega_R)\hookrightarrow\hookrightarrow
\bfW^{1,2}(\Omega_R)$ and the boundedness of the operator
$\bfphi\mapsto[\bfU\cdot\nabla\bfphi+\bfphi\cdot\nabla\bfU]$ from
$\bfW^{1,2}(\Omega_R)$ to $\bfL^2(\Omega_R)$ (which follows from
(\ref{1.4})), one can show that there exists a subsequence of
$\{\bfphi_n\}$ (which we denote again by $\{\bfphi_n\}$), such
that the sequence $\bigl\{ \bfU\cdot\nabla
\bfphi_n+\bfphi_n\cdot\nabla\bfU \bigr\}$ converges in
$\bfL^2(\Omega_R)$. Hence the first term on the right hand side of
(\ref{2.25}) is also less than or equal to $\epsilon/2$ for $m$
and $n$ sufficiently large. This shows that $\{B^2\bfphi_n\}$ is a
Cauchy sequence in $\Ls$. The proof is completed.
\end{proof}

\begin{lemma} \label{L3}
Let $t>0$. Then the operator $\int_0^t \rme^{\cL\tau}\, B^2\,
\rme^{\cL^0(t-\tau)}\; \rmd\tau$ is compact in $\Ls$.
\end{lemma}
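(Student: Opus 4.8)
The goal is to show that the integral operator $K(t):=\int_0^t \rme^{\cL\tau}\, B^2\, \rme^{\cL^0(t-\tau)}\; \rmd\tau$ is compact on $\Ls$. By Lemma~\ref{L2}, each integrand $\rme^{\cL\tau}\, B^2\, \rme^{\cL^0(t-\tau)}$ is a composition of the bounded operator $\rme^{\cL\tau}$ with the compact operator $B^2\, \rme^{\cL^0(t-\tau)}$, hence compact. So the heart of the matter is to pass compactness through the (Bochner) integral. The standard device is: a limit, in the operator norm of $\calL(\Ls)$, of compact operators is compact; so it suffices to approximate $K(t)$ in operator norm by finite Riemann-type sums $\sum_j \rme^{\cL\tau_j}\, B^2\, \rme^{\cL^0(t-\tau_j)}\,\Delta\tau_j$, each of which is a finite sum of compact operators and hence compact.

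First I would record the continuity and integrability facts needed to justify the Riemann-sum approximation. From the bound displayed just before Lemma~\ref{L2}, namely $\|\rme^{\cL\tau}\, B^2\, \rme^{\cL^0(t-\tau)}\bfphi\|_2\leq M_\mu\,\rme^{\mu\tau}\,\cc62\,\cc65\,(t-\tau)^{-1/2}\,\|\bfphi\|_2$, the integrand has operator norm dominated by an $L^1$ function of $\tau$ on $[0,t]$, so the Bochner integral exists in $\calL(\Ls)$ and the tails near $\tau=t$ are controllable: given $\epsilon>0$, choose $\delta>0$ so that $\int_{t-\delta}^t M_\mu\,\rme^{\mu\tau}\,\cc62\,\cc65\,(t-\tau)^{-1/2}\,\rmd\tau<\epsilon/2$, and then $\|K(t)-\int_0^{t-\delta}\rme^{\cL\tau}\, B^2\, \rme^{\cL^0(t-\tau)}\;\rmd\tau\|<\epsilon/2$. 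On the remaining compact interval $[0,t-\delta]$ the integrand is continuous in $\tau$ in the operator norm of $\calL(\Ls)$: indeed $\tau\mapsto\rme^{\cL\tau}$ is strongly continuous and $\tau\mapsto B^2\,\rme^{\cL^0(t-\tau)}$ is continuous in $\calL(\Ls)$-norm on $[0,t-\delta]$ because, by (\ref{2.11})--(\ref{2.12a}), $\rme^{\cL^0(t-\tau)}$ maps into $\bfW^{2,2}(\Omega)\cap\Wns$ with norm continuous in $\tau$ there, while $B^2$ is bounded from that space to $\Ls$ (as used in the proof of Lemma~\ref{L2}); one then upgrades strong continuity of $\rme^{\cL\tau}$ composed with a norm-continuous family of operators whose ranges lie in a fixed compact-resolvent-type setting — more simply, one uses that a product of a strongly continuous family with an operator-norm-continuous family is operator-norm continuous when composed with an operator whose output is the input of the strongly continuous part and the set $\{B^2\rme^{\cL^0(t-\tau)}:\tau\in[0,t-\delta]\}$ is a norm-compact subset of $\calL(\Ls)$ (being the continuous image of a compact interval). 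This last point makes uniform approximation of $\rme^{\cL\tau}$ on that set by a single modulus of continuity possible.

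With continuity in operator norm on $[0,t-\delta]$ in hand, standard Riemann-sum theory gives a partition $0=\tau_0<\cdots<\tau_N=t-\delta$ with $\bigl\|\int_0^{t-\delta}\rme^{\cL\tau}\, B^2\, \rme^{\cL^0(t-\tau)}\;\rmd\tau-\sum_{j=1}^N(\tau_j-\tau_{j-1})\,\rme^{\cL\tau_{j-1}}\, B^2\, \rme^{\cL^0(t-\tau_{j-1})}\bigr\|_{\calL(\Ls)}<\epsilon/2$. Hence $K(t)$ lies within $\epsilon$ of a finite sum $\sum_j(\tau_j-\tau_{j-1})\,\rme^{\cL\tau_{j-1}}\, B^2\, \rme^{\cL^0(t-\tau_{j-1})}$, which is compact by Lemma~\ref{L2} and the ideal property of compact operators. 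Since $\epsilon>0$ was arbitrary, $K(t)$ is an operator-norm limit of compact operators, hence compact.

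The main obstacle is the operator-norm (not merely strong) continuity of the integrand $\tau\mapsto\rme^{\cL\tau}\, B^2\, \rme^{\cL^0(t-\tau)}$ on $[0,t-\delta]$: the semigroup $\rme^{\cL\tau}$ is only $C_0$, so $\tau\mapsto\rme^{\cL\tau}$ is not operator-norm continuous by itself. What rescues the argument is exactly the smoothing in $\rme^{\cL^0(t-\tau)}$ plus the compactness of $B^2$ from Lemma~\ref{L2}: composing a strongly continuous family with a fixed compact operator yields an operator-norm continuous family, and here the ``fixed'' compact operator varies but only through a norm-compact family, so uniform estimates survive. I would take care to phrase this cleanly — either via the norm-compactness of $\{B^2\rme^{\cL^0(t-\tau)}:\tau\in[0,t-\delta]\}$ and uniform strong continuity of $\rme^{\cL\tau}$ on compact sets, or via a direct $\epsilon/3$ splitting — since this is the one place where the $C_0$ (rather than analytic) nature of the semigroup genuinely bites.
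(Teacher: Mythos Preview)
Your overall architecture---chop off a neighbourhood of $\tau=t$ using the integrable operator-norm bound, show the truncated integral is compact, then pass to the limit in $\calL(\Ls)$---is exactly what the paper does. The difference is in how the truncated integral is handled: the paper proves only \emph{strong} continuity of $\tau\mapsto \rme^{\cL\tau}\, B^2\, \rme^{\cL^0(t-\tau)}\bfphi$ on $[0,t-\xi]$ (via the splitting
\[
\rme^{\cL(\tau+\delta)}B^2\bigl(\rme^{\cL^0(t-\tau-\delta)}-\rme^{\cL^0(t-\tau)}\bigr)\bfphi
+\bigl(\rme^{\cL(\tau+\delta)}-\rme^{\cL\tau}\bigr)B^2\rme^{\cL^0(t-\tau)}\bfphi
\]
and the factorisation trick $\rme^{\cL^0(t-\tau-\delta)}=\rme^{\cL^0(t-\tau)/2}\rme^{\cL^0((t-\tau)/2-\delta)}$ to push the difference through the smoothing), and then invokes the ready-made result \cite[Theorem~C.7]{EnNa}: a strongly continuous family of compact operators on a compact interval has compact integral. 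You instead try to establish \emph{operator-norm} continuity and run the Riemann-sum argument by hand.

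There is a genuine gap in your justification of norm continuity. You write that ``$\rme^{\cL^0(t-\tau)}$ maps into $\bfW^{2,2}(\Omega)\cap\Wns$ with norm continuous in $\tau$''. The estimates (\ref{2.11})--(\ref{2.12a}) give only \emph{bounds}, not continuity; norm continuity of $s\mapsto\rme^{\cL^0 s}$ in $\calL(\Ls,D(A))$ would be automatic for an analytic semigroup, but here $\rme^{\cL^0 s}$ is merely $C_0$, and you cannot read off norm continuity from the stated inequalities. Consequently your claim that $\{B^2\rme^{\cL^0(t-\tau)}:\tau\in[0,t-\delta]\}$ is norm-compact ``as the continuous image of a compact interval'' is circular: you are assuming the norm continuity you need to prove.

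The gap is repairable, and in fact your own later remark points the way. Factor $B^2\rme^{\cL^0(t-\tau)}=\bigl(B^2\rme^{\cL^0\delta/2}\bigr)\cdot\rme^{\cL^0(t-\tau-\delta/2)}$ for $\tau\in[0,t-\delta]$; the first factor is a \emph{fixed} compact operator by Lemma~\ref{L2}. Since $\Ls$ is Hilbert, the adjoint semigroup is again $C_0$, and one checks (via Schauder and uniform convergence of a $C_0$-semigroup on compacta) that $K\cdot T(s)$ is norm-continuous whenever $K$ is compact and $T(\cdot)$ is strongly continuous. This gives norm continuity of $\tau\mapsto B^2\rme^{\cL^0(t-\tau)}$; combined with the fact you already cite (that $T(\tau)\cdot K$ is norm-continuous for compact $K$), the full integrand is norm-continuous on $[0,t-\delta]$ and your Riemann-sum argument goes through. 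So your route works, but it is longer than the paper's: proving strong continuity is easier, and \cite[Theorem~C.7]{EnNa} absorbs the Riemann-sum bookkeeping.
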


\begin{proof}
Let us at first show that for each given $\bfphi\in\Ls$, the
function $\rme^{\cL\tau}\, B^2\, \rme^{\cL^0(t-\tau)}\bfphi$ is
continuous (in the norm of $\Ls$) in dependence on $\tau$ in
the interval $[0,t)$. Thus, let $\tau\in[0,t)$ be fixed and let
$\delta$ satisfy $-\tau<\delta<t-\tau$. We have
\begin{align*}
\bigl\| \rme^{\cL(\tau+\delta)}\, & B^2\,
\rme^{\cL^0(t-\tau-\delta)}\bfphi- \rme^{\cL\tau}\, B^2\,
\rme^{\cL^0(t-\tau)}\bfphi \bigr\|_2 \\ \noalign{\vskip 1pt}
&\leq\ \bigl\| \rme^{\cL(\tau+\delta)}\, B^2\, (\rme^{\cL^0(t-
\tau-\delta)}-\rme^{\cL^0(t-\tau)})\bfphi\bigr\|_2+ \bigl\|
(\rme^{\cL(\tau+\delta)}-\rme^{\cL\tau})\, B^2\,
\rme^{\cL^0(t-\tau)}\bfphi\bigr\|_2.
\end{align*}
The second term on the right hand side tends to zero for
$\delta\to 0$ due to the strong continuity of the semigroup
$\rme^{\cL t}$. The first term on the right hand side is less
than or equal to
\begin{align*}
M_{\mu}\, & \rme^{\mu(\tau+\delta)}\ \bigl\| B^2\, (\rme^{\cL^0(t-
\tau-\delta)}-\rme^{\cL^0(t-\tau)})\bfphi\bigr\|_2\ \leq\
M_{\mu}\, \rme^{\mu(\tau+\delta)}\, \cc62\, \bigl| (\rme^{\cL^0(t-
\tau-\delta)}-\rme^{\cL^0(t-\tau)})\bfphi\bigr|_{1,2} \\
\noalign{\vskip 1pt}
&=\ M_{\mu}\, \rme^{\mu(\tau+\delta)}\, \cc62\, \bigl|
\rme^{\cL^0(t-\delta)/2}\, (\rme^{\cL^0((t-\tau)/2-\delta)}-
\rme^{\cL^0(t-\tau)/2})\bfphi\bigr|_{1,2} \\
&\leq\ M_{\mu}\, \rme^{\mu(\tau+\delta)}\, \cc62\,
\frac{\cc65\sqrt{2}}{\sqrt{t-\tau}}\, \bigl\|
(\rme^{\cL^0((t-\tau)/2-\delta)}-
\rme^{\cL^0(t-\tau)/2})\bfphi\bigr\|_2,
\end{align*}
which tends to zero for $\delta\to 0$ due to the strong
continuity of the semigroup $\rme^{\cL^0t}$. The continuity of
$\rme^{\cL\tau}\, B^2\, \rme^{\cL^0(t-\tau)}\bfphi$ in
dependence on $\tau$ is proven.

Thus, $\rme^{\cL\tau}\, B^2\, \rme^{\cL^0(t-\tau)}$ is a family of
compact linear operators in $\Ls$, strongly continuous in
dependence on $\tau$ for $\tau\in[0,\xi]$ for every $0<\xi<t$.
This information enables us to apply Theorem C.7 from
\cite[p.~525]{EnNa} and conclude that $\int_0^{t-\xi}
\rme^{\cL\tau}\, B^2\, \rme^{\cL^0(t-\tau)}\; \rmd\tau$ is a
compact operator in $\Ls$ for every $\xi\in(0,t)$. Since
\vspace{-8pt}
\begin{displaymath}
\int_0^t \rme^{\cL\tau}\, B^2\, \rme^{\cL^0(t-\tau)}\;
\rmd\tau\ =\ \lim_{\xi\to 0+}\ \int_0^{t-\xi} \rme^{\cL\tau}\,
B^2\, \rme^{\cL^0(t-\tau)}\; \rmd\tau
\end{displaymath}
in the topology of $\calL(\Ls)$, and the subspace of compact
linear operators in $\Ls$ is closed in $\calL(\Ls)$, we observe
that the operator $\int_0^t \rme^{\cL\tau}\, B^2\,
\rme^{\cL^0(t-\tau)}\; \rmd\tau$ is compact, too.
\end{proof}

\vspace{4pt}
Formula (\ref{2.16}) and Lemma \ref{L3} show that, for any
$t>0$, the operators $\rme^{\cL t}$ and $\rme^{\cL^0 t}$ differ
just by an additive compact operator. Thus, $\SpessEN(\rme^{\cL
t})= \SpessEN(\rme^{\cL^0t})$. Consequently,
$\widetilde{r}_{\rm ess}(\rme^{\cL t})=\widetilde{r}_{\rm
ess}(\rme^{\cL^0t})$ for each $t>0$ and
$\widetilde{\gamma}_{\rm ess}(\rme^{\cL
t})=\widetilde{\gamma}_{\rm ess}(\rme^{\cL^0t})=0$. (The last
identity is a part of (\ref{2.14}).) Since $\gamma(\rme^{\cL
t})=\max\{\widetilde{\gamma}_{\rm ess}(\rme^{\cL t});\
s(\cL)\}=\max\{0;\ s(\cL)\}$, we obtain the equality
\begin{equation}
\gamma(\rme^{\cL t})\ =\ s(\cL). \label{2.26}
\end{equation}

\begin{theorem} \label{T1}
The uniform growth bound $\gamma(\rme^{\cL t})$ of the
semigroup $\rme^{\cL t}$ and the spectral bound $s(\cL)$ of
operator $\cL$ are equal. Moreover, for every $\xi>0$, the set
$\Gamma_{\xi}:=\Spe(\cL)\cap\{\lambda\in\C;\ \Re\,
\lambda\geq\xi\}$ consists of at most a finite number of
eigenvalues of $\cL$ with finite algebraic multiplicities.
\end{theorem}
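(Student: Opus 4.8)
The first assertion is already at hand: the equality $\gamma(\rme^{\cL t})=s(\cL)$ is precisely identity (\ref{2.26}), which was obtained above from the variation of parameters formula (\ref{2.16}), the compactness of $\int_0^t\rme^{\cL\tau}\,B^2\,\rme^{\cL^0(t-\tau)}\,\rmd\tau$ proved in Lemma \ref{L3}, the resulting identity $\SpessEN(\rme^{\cL t})=\SpessEN(\rme^{\cL^0t})$ and hence $\widetilde{\gamma}_{\rm ess}(\rme^{\cL t})=\widetilde{\gamma}_{\rm ess}(\rme^{\cL^0t})=0$ (see (\ref{2.14}) and the paragraph following Lemma \ref{L3}), together with the formula $\gamma(\rme^{\cL t})=\max\{\widetilde{\gamma}_{\rm ess}(\rme^{\cL t});\,s(\cL)\}$. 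So in the proof I would simply quote (\ref{2.26}) and then concentrate on the statement about $\Gamma_{\xi}$.

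For that statement, fix $\xi>0$. I would first dispose of the multiplicity claim. By (\ref{2.10}) the set $\Spess(\cL)=\Spess(\cL^0)$ is contained in $\{\lambda\in\C;\ \Re\,\lambda\leq 0\}$, whereas every $\lambda\in\Gamma_{\xi}$ has $\Re\,\lambda\geq\xi>0$; hence $\Gamma_{\xi}\cap\Spess(\cL)=\emptyset$, and the description of $\Spe(\cL)\smallsetminus\Spess(\cL)$ given in subsection \ref{Se2}.4 shows that each $\lambda\in\Gamma_{\xi}$ is an isolated eigenvalue of $\cL$ of finite algebraic multiplicity. It remains to prove that $\Gamma_{\xi}$ is a finite set.

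The engine for this is the identity $\widetilde{\gamma}_{\rm ess}(\rme^{\cL t})=0$, which gives $\widetilde{r}_{\rm ess}(\rme^{\cL t_0})=\rme^{\,t_0\widetilde{\gamma}_{\rm ess}(\rme^{\cL t})}=1$ for every $t_0>0$. Fix such a $t_0$ and pick $\rho$ with $1<\rho<\rme^{\xi t_0}$. By the characterization of the essential spectral radius recalled in subsection \ref{Se2}.7, every $\zeta\in\Spe(\rme^{\cL t_0})$ with $|\zeta|>\widetilde{r}_{\rm ess}(\rme^{\cL t_0})=1$ — in particular every such $\zeta$ with $|\zeta|>\rho$ — is an isolated eigenvalue of $\rme^{\cL t_0}$ of finite algebraic multiplicity. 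Since the set $\Spe(\rme^{\cL t_0})\cap\{|\zeta|>\rho\}$ is bounded (by $\|\rme^{\cL t_0}\|$) and consists of isolated points of a closed set, any accumulation point of it would again be a point of $\Spe(\rme^{\cL t_0})$ of modulus $>\rho>1$, hence isolated — a contradiction; so this set is finite, say $F_0$. Then for every eigenvalue $\lambda$ of $\cL$ with $\Re\,\lambda\geq\xi$, the point-spectrum spectral mapping $\Spep(\rme^{\cL t_0})\smallsetminus\{0\}=\exp(t_0\,\Spep(\cL))$ gives $\rme^{\lambda t_0}\in\Spep(\rme^{\cL t_0})$, and $|\rme^{\lambda t_0}|=\rme^{(\Re\,\lambda)t_0}\geq\rme^{\xi t_0}>\rho$, so $\rme^{\lambda t_0}\in F_0$.

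The only real obstacle in this part is that $\lambda\mapsto\rme^{\lambda t_0}$ is not injective on $\Gamma_{\xi}$: the preimage in $\C$ of a point of $F_0$ is an infinite vertical arithmetic progression with imaginary increment $2\pi/t_0$, and infinitely many of its points lie in the horizontal strip $\{\xi\leq\Re\,\lambda\leq s(\cL)\}$, which contains $\Gamma_{\xi}$ (by the very definition of $s(\cL)$). To get around this I would carry out the construction of the previous paragraph simultaneously for a second time $t_1>0$ chosen with $t_1/t_0\notin\mathbb{Q}$, obtaining a finite set $F_1$ with $\rme^{\lambda t_1}\in F_1$ for all $\lambda\in\Gamma_{\xi}$. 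The map $\lambda\mapsto(\rme^{\lambda t_0},\rme^{\lambda t_1})$ then sends $\Gamma_{\xi}$ into the finite set $F_0\times F_1$, and it is injective, because $\rme^{(\lambda-\mu)t_0}=\rme^{(\lambda-\mu)t_1}=1$ forces $(\lambda-\mu)t_0\in2\pi\rmi\,\Z$ and $(\lambda-\mu)t_1\in2\pi\rmi\,\Z$, which by the incommensurability of $t_0$ and $t_1$ is possible only when $\lambda=\mu$. Hence $\Gamma_{\xi}$ is finite, and the proof is complete. (One could instead avoid the second time by a multiplicity count — the algebraic multiplicity of each $\zeta\in F_0$ as an eigenvalue of $\rme^{\cL t_0}$ bounds the sum of the algebraic multiplicities of those eigenvalues of $\cL$ that $\rme^{\lambda t_0}=\zeta$ singles out — but the two-time argument has the advantage of using only the point-spectrum spectral mapping already recalled in subsection \ref{Se2}.7.)
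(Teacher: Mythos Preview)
Your proof is correct. The only cosmetic slip is that an accumulation point of $\Spe(\rme^{\cL t_0})\cap\{|\zeta|>\rho\}$ could have modulus equal to $\rho$ rather than strictly greater; since $\rho>1=\widetilde{r}_{\rm ess}(\rme^{\cL t_0})$, this does not affect the argument.

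The paper takes a different and somewhat shorter route to the finiteness of $\Gamma_\xi$: it argues by contradiction. Assuming $\Gamma_\xi$ is infinite, its elements cannot accumulate in $\C$ (by the description of $\Spe(\cL)$ in subsection \ref{Se2}.4), while their real parts lie in the compact interval $[\xi,s(\cL)]$ and hence have a cluster point $\xi_0$. The paper then asserts that $\exp(t\,\Gamma_\xi)$ has a cluster point on the circle $|z|=\rme^{\xi_0 t}$ and derives a contradiction, since $\Spe(\rme^{\cL t})$ --- which contains $\exp(t\,\Gamma_\xi)$ by the spectral inclusion theorem --- cannot accumulate outside the disk $|z|\leq\widetilde{r}_{\rm ess}(\rme^{\cL t})=1$. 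This uses only one time $t$ and only the spectral inclusion (not the point-spectrum mapping), but it passes over precisely the point you flag: if infinitely many $\lambda\in\Gamma_\xi$ were to map to finitely many values under $\lambda\mapsto\rme^{\lambda t}$, the image $\exp(t\,\Gamma_\xi)$ would be a finite set and would have no cluster point at all. Your two-time device with $t_1/t_0\notin\mathbb{Q}$ (or the multiplicity count you sketch in the parenthesis) confronts this non-injectivity head-on; the price is a bit more bookkeeping, the gain is that the argument is airtight as written.
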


\begin{proof}
The equality of $\gamma(\rme^{\cL t})$ and $s(\cL)$ has already
been proven. Let $\xi>0$. We may suppose without loss of
generality that $s(\cL)>0$ and $\xi<s(\cL)$. Assume, by
contradiction, that the set $\Gamma_{\xi}$ is infinite. The
elements of $\Gamma_{\xi}$ cannot accumulate at any point of
$\C$, because it would contradict the description of
$\Spe(\cL)$ given in subsection \ref{Se2}.4. Their real parts
are in a bounded interval $[\xi,s(\cL)]$, so the real parts
have a cluster point $\xi_0\in[\xi,s(\cL)]$. Thus, the set
$\exp(t\, \Gamma_{\xi})$ (for some $t>0$) has a cluster point
on the circle $|z|=\rme^{\xi_0 t}$ in $\C$. This is, however,
impossible, because the cluster point of $\exp(t\,
\Gamma_{\xi})$ is also a cluster point of $\exp[t\,
\Spe(\cL)]$, i.e.~also a cluster point of $\Spe(\rme^{\cL t})$
(due to the inclusion $\exp[t\, \Spe(\cL)]\subset\Spe(\rme^{\cL
t})$) and since $\rme^{\xi_0 t}>1=\widetilde{r}_{\rm
ess}(\rme^{\cL^0t})=\widetilde{r}_{\rm ess}(\rme^{\cL t})$, the
set $\Spe(\rme^{\cL t})$ cannot have a cluster point on the
circle $|z|=\rme^{\xi_0 t}$.
\end{proof}

\section{Spectral instability of the zero solution of equation
(\ref{2.2})} \label{Se3}

The purpose of this section is to prove the next theorem:

\begin{theorem} \label{T2}
Assume that $\Spe(\cL)\cap\{\lambda\in\C;\ \Re\,
\lambda>0\}\not=\emptyset$. Then the zero solution of equation
(\ref{2.2}) is unstable in the sense that there exists
$\epsilon>0$ such that to any $\delta>0$ there exists $T>0$,
$\Tstar\in(0,T)$ and a solution $\bfvstar$ of equation
(\ref{2.2}) on the time interval $(0,T)$ such that
$\|\bfvstar(0)\|_{1,2}\leq\delta$ and
$\bfvstar(\Tstar)\|_2\geq\epsilon$. Consequently, the steady
solution $\bfU$ of the problem (\ref{1.1a})--(\ref{1.1d})
(satisfying (\ref{1.3}) and (\ref{1.4})) is unstable in the
same sense.
\end{theorem}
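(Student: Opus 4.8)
The plan is to follow the classical Shatah--Strauss strategy for nonlinear instability of a $C_0$--semigroup, but to compensate for the fact that $\cN$ is \emph{not} bounded in $\Ls$ (it only satisfies the estimate (\ref{2.5f}) involving $\|A\,.\,\|_2^{1/2}$) by working with the \emph{stronger} norm $\|\,.\,\|_{1,2}$ on the data side and exploiting the smoothing estimates (\ref{2.11})--(\ref{2.12a}) of the linearized semigroup. First I would fix $\sigma_0:=s(\cL)=\gamma(\rme^{\cL t})>0$ (the hypothesis $\Spe(\cL)\cap\{\Re\lambda>0\}\neq\emptyset$ together with Theorem~\ref{T1} guarantees $s(\cL)>0$ and that it is attained at an eigenvalue) and pick a growing mode: an eigenfunction $\bfphi_0\in D(\cL)$ with $\cL\bfphi_0=\lambda_0\bfphi_0$ and $\Re\lambda_0=\sigma_0$. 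The linear evolution $\rme^{\cL t}\bfphi_0$ then grows like $\rme^{\sigma_0 t}$, and by (\ref{2.26}) one has the sharp bound $\|\rme^{\cL t}\|_{\calL(\Ls)}\leq M_\mu\,\rme^{\mu t}$ for every $\mu>\sigma_0$. The heuristic is that along the nonlinear flow the linear growth $\rme^{\sigma_0 t}$ dominates until the solution reaches a fixed size $\epsilon$, at which point instability is witnessed.

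The key steps, in order, are: (i) \textbf{Linear growth lower bound.} Show there is $\theta>0$ and a unit vector $\bfw_0\in\Ls$ with $\|\rme^{\cL t}\bfw_0\|_2\geq\theta\,\rme^{\sigma_0 t}$ for all $t\geq0$; this follows from the eigenfunction above (taking real/imaginary parts if $\lambda_0\notin\R$, as in the handling of $\bfvr,\bfvi$ hinted at in the macros) and from $\gamma(\rme^{\cL t})=s(\cL)$, which rules out cancellation. (ii) \textbf{Duhamel setup with regularity.} For initial data $\bfv^*(0)=\delta\bfw_0$ (rescaled into $\Wns$) use the variation-of-parameters formula $\bfv(t)=\rme^{\cL t}\bfv(0)+\int_0^t\rme^{\cL(t-s)}\cN\bfv(s)\,\rmd s$, and estimate the Duhamel term in $\|\,.\,\|_2$ using $\|\rme^{\cL(t-s)}\cN\bfv(s)\|_2$: here I would split off a factor $\rme^{\cL^0(\cdot)}$ from $\rme^{\cL(\cdot)}$ via (\ref{2.16}) to gain the smoothing $t^{-1/2}$ (cf. (\ref{2.12})) needed to absorb the unbounded part of $\cN$, together with (\ref{2.5f}) and an interpolation/bootstrap controlling $\|A\bfv(s)\|_2$ in terms of $\|\bfv(s)\|_{1,2}$ on the existence interval (\ref{2.3}). (iii) \textbf{Closing the nonlinear estimate.} Introduce $\Tstar:=\sup\{t:\ \|\bfv(s)\|_2\leq\delta\,\rme^{\sigma_0 s}\ \ \forall s\leq t\}$ (or an analogous hitting time at level $\epsilon$), show on $[0,\Tstar]$ the Duhamel term is $o$ of the linear term so that $\|\bfv(\Tstar)\|_2$ must actually reach the threshold $\epsilon$ while $\delta$ can still be taken arbitrarily small, contradicting stability; the existence of the solution up to $\Tstar$ is secured because $\|\bfv(t)\|_{1,2}$ stays bounded on that interval (using (\ref{2.3}) and the regularity gained in step (ii)). (iv) \textbf{Transfer to the PDE.} Translate instability of the zero solution of (\ref{2.2}) back into instability of $(\bfU,\Pi)$ via $\bfu=\bfU+\bfv$, which is immediate.

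The main obstacle I expect is step (ii)--(iii): controlling the nonlinearity \emph{in the very space where instability is measured}, given that $\cN$ is unbounded there. Concretely, the estimate (\ref{2.5f}) forces one to bound $\|A\bfv(s)\|_2$, which is \emph{not} part of the $\|\,.\,\|_{1,2}$ norm we want to keep small; the resolution is to use the $C_0$--semigroup smoothing (\ref{2.12}), (\ref{2.12a}) to trade a half-power of $t-s$ for the missing derivative, but this makes the Duhamel integral only \emph{weakly} singular and requires a careful Gronwall-type or iteration argument on the hitting-time interval, keeping track of two norms ($\|\,.\,\|_2$ for growth, $\|\,.\,\|_{1,2}$ for persistence of the solution) simultaneously. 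A secondary subtlety is that the relevant growth rate must be the \emph{spectral} bound, not merely some growth bound — this is exactly why Theorem~\ref{T1} (the equality $\gamma(\rme^{\cL t})=s(\cL)$) is invoked: without it one could not guarantee that the linear part genuinely dominates at rate $\rme^{\sigma_0 t}$ with a clean constant.
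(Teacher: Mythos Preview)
Your overall architecture is right (eigenmode with $\Re\lambda_0=a:=s(\cL)$, bootstrap envelope $\delta K\rme^{at}$, Duhamel, hitting time), and you correctly isolate the obstruction: $\cN$ is unbounded in $\Ls$ while $\rme^{\cL t}$ is merely $C_0$. But step~(ii)--(iii) as written does not close, and the paper supplies a genuinely different mechanism that you are missing.

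The bootstrap must be run in $\|\,.\,\|_{1,2}$, so you need a $W^{1,2}$--estimate of $\int_0^t\rme^{\cL(t-s)}\cN\bfv(s)\,\rmd s$ growing like $(\delta K\rme^{at})^2$. Neither device you propose delivers this. First, ``splitting off $\rme^{\cL^0}$ via (\ref{2.16})'' gives $|\rme^{\cL^0(t-s)}f|_{1,2}\le c(t-s)^{-1/2}\|f\|_2$, but the remainder $\int_0^{t-s}\rme^{\cL\tau}B^2\rme^{\cL^0(\cdot)}\,\rmd\tau$ again needs a $W^{1,2}$--bound on $\rme^{\cL\tau}$, which is circular; iterating produces \emph{some} exponential rate, not one arbitrarily close to $a$, and that kills the comparison with the linear mode. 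Second, ``controlling $\|A\bfv(s)\|_2$ by $\|\bfv(s)\|_{1,2}$'' is impossible pointwise; the only route is the energy identity obtained by testing (\ref{2.2}) with $-A\bfv$, which (using (\ref{2.5b}), (\ref{2.5f}) and (\ref{2.1b})) reads
\[
\frac{\rmd}{\rmd t}\,|\bfv|_{1,2}^2+\nu\|A\bfv\|_2^2\ \le\ c\,|\bfv|_{1,2}^2+c\,|\bfv|_{1,2}^6,
\]
and the constant in front of $|\bfv|_{1,2}^2$, coming from $(B^2\bfv,A\bfv)_2$, is \emph{uncontrolled} and in general much larger than $2a$. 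Thus the bootstrap $\|\bfv(t)\|_{1,2}\le\delta K\rme^{at}$ cannot be recovered from this inequality.

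The paper's remedy is an additional spectral decomposition absent from your plan. One writes $\Ls=\Ls'\oplus\Ls''$, where $\Ls'$ is the \emph{finite--dimensional} span of the (finitely many) eigenfunctions of the selfadjoint operator $\nu A+(1+\kappa)B^2_s$ with positive eigenvalues, and then splits $\bfv=\bfw+\bfz$ through two auxiliary linear problems. The point is that on $\Ls''$ one has the coercivity $((\nu A+B^2_s)\bfphi,\bfphi)_2\le -c\,|\bfphi|_{1,2}^2$, so the energy estimates for $\bfw$ (testing with $\bfw$ and with $-A\bfw$) carry \emph{no} uncontrolled linear term and give directly $\|\bfw(t)\|_2^2+\int_0^t|\bfw|_{1,2}^2\le c(\delta K\rme^{at})^4$. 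The $\bfz$--equation, on the other hand, is forced only by $P'[\ldots]$ with values in the finite--dimensional $\Ls'$; there all norms are equivalent, and together with the regularity estimate $\|\bfphi\|_{2,2}\le c(\|\cL\bfphi\|_2+\|\bfphi\|_2)$ and the commutation of $\cL$ with $\rme^{\cL t}$ this yields $\|\bfz(t)\|_{1,2}\le \delta\rme^{at}+c(\delta K\rme^{at})^2$ at the \emph{correct} rate. Summing closes the bootstrap and produces a hitting time $\Tstar$ with $\delta\rme^{a\Tstar}$ bounded below independently of $\delta$. Without this finite--dimensional/dissipative splitting, your direct Duhamel approach cannot separate the sharp rate $a=s(\cL)$ from crude operator bounds, and the argument does not conclude.
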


\noindent
We present the proof in eight steps, which are explained in
subsections \ref{Se3}.1--\ref{Se3}.8. For reader's convenience,
we focus just on the main ideas in this section, leaving the
detailed explanation and derivation of technical arguments and
estimates to Section \ref{Se4}.

\vspace{4pt} \noindent
{\bf \ref{Se3}.1. Concrete solutions $\bfvr$ and $\bfvi$ of
equation (\ref{2.2}) and function $\bfv$.} \ It follows from
Theorem \ref{T1} that $\Spe(\cL)\cap\{\lambda\in\C;\ \Re\,
\lambda>0\}$ consists only of eigenvalues of $\cL$ and that one
can choose an eigenvalue with the largest real part, equal to
$s(\cL)$. Let $a+\rmi\br b$ be such an eigenvalue. Let
$\bfzeta+\rmi\br\bfeta$ be a corresponding eigenfunction. (The
numbers $a$, $b$ and the functions $\bfzeta$, $\bfeta$ are
supposed to be real.) The eigenfunction can be normalized so
that $\|\bfzeta+\rmi\bfeta\|_{1,2}=1$. Obviously,
\begin{displaymath}
\|\rme^{\cL t}\, (\bfzeta+\rmi\br\bfeta)\|_{1,2}\ =\
\|\rme^{(a+\rmi\br b)t}(\bfzeta+\rmi\br\bfeta)\|_{1,2}\ =\
\rme^{at}\, \|\bfzeta+\rmi\br\bfeta\|_{1,2}\ =\ \rme^{at}.
\end{displaymath}

Let $\delta>0$ and $\bfvr$ and $\bfvi$ be solutions of the
equation (\ref{2.2}), satisfying the initial conditions
$\bfvr(0)=\delta\br\bfzeta$ and $\bfvi(0)=\delta\br\bfeta$,
respectively. It follows from the existential results, cited in
subsection \ref{Se2}.3, that both the solutions $\bfvr$ and
$\bfvi$ exist on the time interval $[0,T)$ (for some $T>0$),
belong to the class (\ref{2.3}) and either $T=\infty$ or
$\|\bfvr(t)\|_{1,2}+\|\bfvi(t)\|_{1,2}\to\infty$ for $t\to T-$.
Put $\bfv:=\bfvr+\rmi\br\bfvi$. Function $\bfv$ satisfies the
equation
\begin{equation}
\frac{\rmd\bfv}{\rmd t}\ =\ \cL\bfv+\cN\bfvr+\rmi\, \cN\bfvi
\label{3.4}
\end{equation}
and the initial condition
\begin{equation}
\bfv(0)\ =\ \delta\br(\bfzeta+\rmi\br\bfeta). \label{3.5}
\end{equation}

\noindent
{\bf \ref{Se3}.2. Numbers $K$ and $\Tone$.} \  Let $K>1$. Since
$\|\bfv(0)\|_{1,2}=\delta$, the inequality
\begin{equation}
\|\bfv(t)\|_{1,2}\ \leq\ \delta K\, \rme^{at} \label{3.9}
\end{equation}
holds for $t$ in some right neighborhood of $0$. Denote by
$\Tone$ the maximum number such that (\ref{3.9}) holds for all
$t\in(0,\Tone)$. ($\Tone=\infty$ is also admitted.)

From now on, all estimates, cited or derived in subsections
\ref{Se3}.5 and \ref{Se4}.3--\ref{Se4}.5, are related to $t$ in
the time interval $[0,\Tone]$ (if $\Tone<\infty$) or
$[0,\infty)$ (if $\Tone=\infty$).

\vspace{4pt} \noindent
{\bf \ref{Se3}.3. Decomposition of the space $\Ls$.} \ Let
$\kappa>0$ be fixed. It is proven in \cite{Ne3} that the number
of positive eigenvalues of the self-adjoint operator $\nu
A+(1+\kappa)B^2_s$ is finite. Let us denote these eigenvalues
by $\lambda_1\leq\lambda_2\leq \dots\leq\lambda_N$, each of
them being repeated as many times as is its multiplicity. Let
$\bfphi_1,\dots,\bfphi_N$ be associated eigenfunctions. We can
assume that the eigenfunctions have been chosen so that they
constitute an orthonormal system in $\Ls$. Denote by $\Ls'$ the
linear hull of $\bfphi_1,\dots,\bfphi_N$ and by $P'$ the
orthogonal projection of $\Ls$ onto $\Ls'$. The orthogonal
complement to $\Ls'$ in $\Ls$ is denoted by $\Ls''$ and the
orthogonal projection of $\Ls$ onto $\Ls''$ is denoted by
$P''$. Then we have
\begin{displaymath}
\Ls\ =\ \Ls'\oplus\Ls''
\end{displaymath}
and the operator $\, \nu A+(1+\kappa)B^2_s\, $ is reduced on each
of the subspaces $\Ls'$ and $\Ls''$. Using the negative
definiteness of $\nu A+(1+\kappa)B^2_s$ in $\Ls''$, one can easily
derive that $\cn71$
\begin{equation}
\bigl( (\nu A+B^2_s)\bfphi,\bfphi\bigr)_2\ \leq\ -\cc71\,
|\bfphi|_{1,2}^2 \label{3.1}
\end{equation}
for all $\bfphi\in\Ls''\cap D(A)$, where
$\cc71=\kappa\nu/(1+\kappa)$. Inequality (\ref{3.1}) shows that
the operator $\nu A+B^2_s$ is essentially dissipative in $\Ls''$.

\vspace{4pt} \noindent
{\bf \ref{Se3}.4. Splitting of the problem (\ref{3.4}),
(\ref{3.5}).} \ We show in subsection \ref{Se4}.2 that the
solution $\bfv$ of (\ref{3.4}), (\ref{3.5}) can be expressed in
the form $\bfv=\bfw+\bfz$, where $\bfw,\, \bfz$ are solutions
of the equations
\begin{alignat}{3}
& \frac{\rmd\bfw}{\rmd t}-\omega\br B^0\bfw-u_{\infty}\br B^1\bfw\
&& =\ \nu A\bfw+B^2_s\bfw-P'[\omega\br B^0\bfw+
u_{\infty}\br B^1\bfw-\kappa B^2_s\bfw] \nonumber \\
& && \hspace{15pt} +P''B^2_a\bfw+ P''[\cN\bfvr+ \rmi\br\cN\bfvi],
\label{3.6} \\ \noalign{\vskip 1pt}
& \frac{\rmd\bfz}{\rmd\br t}-\omega\br B^0\bfz-u_{\infty}\br
B^1\bfz\ && =\ \nu A\bfz+B^2\bfz+P'\bigl[ \omega\br
B^0\bfw+u_{\infty}\br B^1\bfw-\kappa\br B^2_s\bfw+B^2_a\bfw \bigr]
\nonumber \\
& && \hspace{15pt} +P'[\cN\bfvr+\rmi\br\cN\bfvi] \label{3.7}
\end{alignat}
with the initial conditions
\begin{equation}
\bfw(0)=\bfzero, \qquad \bfz(0)=\delta\br(\bfzeta+\rmi\br\bfeta)
\label{3.8}
\end{equation}
on the interval $(0,T)$ in the class (\ref{2.3}). We also show in subsection \ref{Se4}.2 that
(\ref{3.6}) is an equation in the space $\Ls''$, where the
operator $\nu A+B^2_s$ is essentially dissipative. On the other
hand, all terms on the right hand side of equation (\ref{3.7}),
except for $\nu A\bfz+B^2\bfz$, belong to the
finite--dimensional space $\Ls'$, where all norms are
equivalent. These properties play an important role in the
estimates of $\bfw$ and $\bfz$ (and therefore also estimates of
$\bfv$), derived in subsections \ref{Se4}.3--\ref{Se4}.5.

Note that the belonging of $\bfw$ and $\bfz$ to the class
(\ref{2.3}) is not sufficient to guarantee that the term
$P'B^0\bfw$ in equation (\ref{3.7}) has a sense, because
$\bfw(t)$ need not generally be in $D(\cL^0)$ for
a.a.~$t\in(0,T)$. Nevertheless, we also explain in subsection
\ref{Se4}.1 how one should understand the meaning of
$P'B^0\bfw$.

\vspace{4pt} \noindent
{\bf \ref{Se3}.5. Estimates of functions $\bfz$, $\bfw$ and
$\bfv$.} \ Equation (\ref{3.7}) can also be written in the form
\begin{displaymath}
\frac{\rmd\bfz}{\rmd\br t}\ =\ \cL\bfz+P'\bigl[ \omega\br
B^0\bfw+u_{\infty}\br B^1\bfw-\kappa\br B^2_s\bfw+B^2_a\bfw
\bigr]+P'[\cN\bfvr+\rmi\br\cN\bfvi].
\end{displaymath}
Using the integral representation of $\bfz$ by means of the
variation of parameters formula, we get $\bfz=\bfz_1+\bfz_2$,
where\begin{equation}
\begin{array}{ll}\medskip\label{NG}
\bfz_1(t)\ &=\ \rme^{\cL t}\br\bfz(0)\ =\ \delta\,
\rme^{(a+\rmi b)t}\, (\bfzeta+\rmi\br\bfeta), \\
\bfz_2(t)\ &\displaystyle{=\ \int_0^t \rme^{\cL(t-\tau)}\, P' [\omega\br
B^0\bfw(\tau)+u_{\infty}\br B^1\bfw(\tau)-\kappa\br B^2_s
\bfw(\tau)+B^2_a\bfw(\tau)]\; \rmd\tau} \\
& \displaystyle{\hspace{15pt} +\int_0^t\rme^{\cL(t-\tau)}\, P'[\cN\bfvr(\tau)+
\cN\bfvi(\tau)]\; \rmd\tau.}
\end{array}
\end{equation}
The function $\bfz_1$ satisfies the obvious equality
\begin{equation}
\|\bfz_1(t)\|_2\ =\ \delta\, \|\bfzeta+\rmi\br\bfeta\|_2\
\rme^{at}. \label{3.29}
\end{equation}
Let $\mu\in(a,2a)$. Since $a=s(\cL)=\gamma(\rme^{\cL t})$, there
exists $M_{\mu}>0$ such that
\begin{equation}
\|\rme^{\cL t}\bfphi\|_2\ \leq\ M_{\mu}\, \rme^{\mu t}\,
\|\bfphi\|_2 \label{3.3}
\end{equation}
for all $\bfphi\in\Ls$ and $t\geq 0$. Applying in \eqref{NG}$_2$ this inequality,
along with (\ref{3.3}) and the estimates from subsection
\ref{Se4}.1, we obtain $\cn72\cn73$
\begin{equation}
\|\bfz_2(t)\|_2\ \leq\ \int_0^t M_{\mu}\, \rme^{\mu(t-\tau)}\,
\cc72\, \|\bfw(\tau)\|_2\; \rmd\tau+\frac{M_{\mu}\, \cc73\,
\delta^2 K^2}{2a-\mu}\ \rme^{2at}, \label{3.30}
\end{equation}
where $\cc72$ and $\cc73$ are appropriate positive constants.
The function $\bfw$ can be estimated from equation (\ref{3.6}),
multiplying (\ref{3.6}{) by $\bfw$ and integrating in $\Omega$,
see subsection \ref{Se4}.3. We get: $\cn76$
\begin{equation}
\|\bfw(t)\|_2^2+\cc71\int_0^t|\bfw(\tau)|_{1,2}^2\; \rmd\tau\
\leq\ \frac{\cc76\, \delta^4\br K^4}{4a}\ \rme^{4at}.
\label{3.16}
\end{equation}
From (\ref{3.30}) and 
(\ref{3.16}), we get, in particular, $\cn77\cn78$
\begin{equation}
\|\bfz_2(t)\|_2\ \leq\ \cc77\, (\delta K\, \rme^{at})^2,
\label{3.13}
\end{equation}
which, combined with $\eqref{NG}_1$,   yields
\vspace{-10pt}
\begin{equation}
\|\bfz(t)\|_2\ \leq\ \delta\, \|\bfzeta+\rmi\br\bfeta\|_2\
\rme^{at}+\cc77\, (\delta K\, \rme^{at})^2. \label{3.17}
\end{equation}
Similarly, we also derive the inequality
\begin{equation}
\|\bfz(t)\|_{1,2}\ \leq\ \delta\, \|\bfzeta+\rmi\br\bfeta\|_{1,2}\
\rme^{at}+\cc78\, (\delta K\, \rme^{at})^2\ \equiv\ \delta\,
\rme^{at}+\cc78\, (\delta K\, \rme^{at})^2; \label{3.18}
\end{equation}
see Subsection \ref{Se4}.4 for the details.

We also obtain an estimate of $|\bfw|_{1,2}$, by multiplying
equation (\ref{3.6}) by $(-A\bfw)$, integrating in $\Omega$ and
using inequalities (\ref{2.5a}), (\ref{2.5b}), (\ref{2.5f}) and
(\ref{3.9}). Summing then appropriately the estimates of
$\|\bfz\|_2$, $|\bfz|_{1,2}$, $\|\bfw\|_2$ and $|\bfw|_{1,2}$,
we get $\cn81\cn82\cn83\cn84$
\begin{equation}
\|\bfv(t)\|_{1,2}\ \leq\ \cc81\, (\delta\, \rme^{at})+
(\cc82+\cc83\, K)\, K^2\, (\delta\, \rme^{at})^2+\cc84\, K^3\,
(\delta\, \rme^{at})^3 \label{3.26},
\end{equation}
where the constants $\cc81$--$\cc84$ are independent of
$\delta$ and $K$. (See subsection \ref{Se4}.5.) Note that
$\cc81>1$.

\vspace{4pt} \noindent
{\bf \ref{Se3}.6. Choice of the Number $\Tstar$.} \ The right hand side of
(\ref{3.26}) is less than the right hand side of (\ref{3.9}) at
the initial time $t=0$ if
\vspace{-4pt}
\begin{displaymath}
\cc81+(\cc82+\cc83\, K)\, K^2\, \delta+\cc84\, K^3\, \delta^2\
<\ K,
\end{displaymath}
which is satisfied if  $K$ and $\delta$ are chosen
so that
\begin{equation}
\cc81<K, \qquad (\cc82+\cc83\, K)\, K^2\, \delta+\cc84\, K^3\,
\delta^2<K-\cc81. \label{3.35}
\end{equation}
Assuming that (\ref{3.35}) holds and using the fact that the
right hand side of (\ref{3.26}) growths with increasing $t$
faster than the right hand side of (\ref{3.9}), we deduce that
there exists $0<\Tstar\leq \Tone$ such that the right hand
sides of (\ref{3.26}) and (\ref{3.9}) coincide at the time
$\Tstar$. It means that
\begin{displaymath}
\cc81\, (\delta\, \rme^{a\Tstar})+ (\cc82+\cc83\, K)\, K^2\,
(\delta\, \rme^{a\Tstar})^2+\cc84\, K^3\, (\delta\,
\rme^{a\Tstar})^3\ =\ \delta\br K\, \rme^{a\Tstar}.
\end{displaymath}
This yields $\cn85$
\vspace{-8pt}
\begin{equation}
\delta\, \rme^{a\Tstar}\ =\ \cc85(K)\ :=\ \frac{2\, (K-\cc81)}{
\cc84\br K^2\, [(\cc82+\cc83\br K)+ \sqrt{(\cc82+\cc83\br
K)^2+4(K-\cc81)}]}. \label{3.32}
\end{equation}

\vspace{4pt} \noindent
{\bf \ref{Se3}.7. Lower estimates of $\|\bfv(\Tstar)\|_2$.} \
(\ref{3.29}) and (\ref{3.13} yield
\begin{displaymath}
\|\bfz(\Tstar)\|_2\ \geq\
\|\bfz_1(\Tstar)\|_2-\|\bfz_2(\Tstar)\|_2\ \geq\ \delta\,
\|\bfzeta+\rmi\br\bfeta\|_2\, \rme^{a\Tstar}-\cc77\,
\delta^2\br K^2\, \rme^{2a\Tstar}.
\end{displaymath}
Hence, due to (\ref{3.16}), we also have $\cn89$
\begin{align*}
\|\bfv(\Tstar)\|_2\ &\geq\ \|\bfz(\Tstar)\|_2-\|\bfw(\Tstar)\|_2\ \geq\
\delta\, \|\bfzeta+\rmi\br\bfeta\|_2\, \rme^{a\Tstar}-\cc89\,
\delta^2\br K^2\, \rme^{2a\Tstar},
\end{align*}
where $\cc89=\cc77+\sqrt{\cc76}/(2\sqrt{a})$. Expressing
$\delta\, \rme^{a\Tstar}$ from (\ref{3.32}), we obtain $\cn91$
\begin{equation}
\|\bfv(\Tstar)\|_2\ \geq\ \cc85(K)\, \bigl[ \br
\|\bfzeta+\rmi\br\bfeta\|_2-\cc89\br K^2\, \cc85(K) \bigr]\ =:\
\cc91(K). \label{3.33}
\end{equation}
If $K>\cc81$ is chosen sufficiently close to $\cc81$ then
$\|\bfzeta+\rmi\br\bfeta\|_2>\cc89\br K^2\, \cc85(K)$, which
means that $\cc91(K)$ is positive. It is remarkable that it is
independent of $\delta$.

\vspace{4pt} \noindent
{\bf \ref{Se3}.8. Completion of the proof.} \ Recall that
$\bfv=\bfvr+\rmi\br\bfvi$, where $\bfvr$ and $\bfvi$ are real
solutions of equation (\ref{2.2}), satisfying the initial
conditions $\bfvr(0)=\delta\br\bfzeta$ and
$\bfvi(0)=\delta\br\bfeta$, respectively. Put
$\epsilon:=\cc91(K)/\sqrt{2}$. Inequality (\ref{3.33}) implies
that either $\|\bfvr(\Tstar)\|_2\geq\epsilon$ or
$\|\bfvi(\Tstar)\|_2\geq\epsilon$. Thus, given $\delta>0$
arbitrarily small (satisfying (\ref{3.35})), there exists a
real solution $\bfvstar$ of equation (\ref{2.2})
(i.e.~$\bfvstar=\bfvr$ or $\bfvstar=\bfvi$) whose initial
$\bfW^{1,2}$--norm is less than or equal to $\delta$ and the
$\bfL^2$--norm at the time $t=\Tstar$ is greater than or equal
to $\epsilon$. This completes the proof of Theorem \ref{T2}.
\hfill \K

\section{Appendix} \label{Se4}

{\bf \ref{Se4}.1. Estimate (\ref{3.30}).} \ The crucial point in
the derivation of (\ref{3.30}) are the inequalities
\begin{equation}
\int_{\Omega}|\bfx|^2\, |\Delta\bfphi_k|^2\;
\rmd\bfx+\int_{\Omega}|\bfx|^2\, |\nabla\bfphi_k|^2\; \rmd\bfx\ <\
\infty \qquad (k=1,\dots,N), \label{4.1}
\end{equation}
see \cite[Lemma 7]{GaNe1}. They enable one to show that $\cn42$
\begin{equation}
\|P'B^0\bfphi\|_2+\|P'B^1\bfphi\|_2+\|P'B^2_s\bfphi\|_2+
\|P'B^2_a\bfphi\|_2\ \leq\ \cc42\, |\bfphi|_{1,2} \label{4.2}
\end{equation}
for all $\bfphi\in D(A)$, see \cite{GaNe1}. Using especially
the fact that $P'$ is the projection onto the $N$--dimensional
space $\Ls'$, where the norms $\|\, .\, \|_2$ and $|\, .\,
|_{1,2}$ are equivalent, and mainly copying the procedure from
\cite{GaNe1}, one can show that (\ref{4.2}) is also satisfied
with $\cn43\cc43\, \|\bfphi\|_2$ on the right hand side instead
of $\cc42\, |\bfphi|_{1,2}$. Inequalities (\ref{4.1}) imply that the term $P'B^0\bfphi$ is well defined, although $B^0\bfphi$ is
not necessarily in $\Ls$ for $\bfphi\in D(A)$. (Recall that the
inclusion $B^0\bfphi\in\Ls$ is guaranteed if $\bfphi\in
D(\cL^0)$.) Nevertheless, even for $\bfphi\in D(A)$, one can
put
\vspace{-8pt}
\begin{equation}
P'B^0\bfphi\ :=\ \sum_{k=1}^N\biggl(\int_{\Omega}
[(\bfe_1\times\bfx)\cdot\nabla\bfphi-\bfe_1\times\bfphi]\cdot
\bfphi_k\; \rmd\bfx\biggr)\, \bfphi_k, \label{4.3}
\end{equation}
where the integral over $\Omega$ equals
\begin{align*}
\lim_{R\to\infty}\ & \int_{\Omega_R}[(\bfe_1\times\bfx)\cdot
\nabla\bfphi-\bfe_1\times\bfphi]\cdot\bfphi_k\; \rmd\bfx \\
&=\ \lim_{R\to\infty}\ \biggl( \int_{\partial\Omega_R}[(\bfe_1
\times\bfx)\cdot\bfn]\, \bfphi\cdot\bfphi_k\; \rmd S-\int_{\Omega_R}
[(\bfe_1\times\bfx)\cdot\nabla\bfphi_k-\bfe_1\times\bfphi_k]\cdot
\bfphi\; \rmd\bfx \biggr) \\
&=\ \int_{\Omega} [(\bfe_1\times
\bfx)\cdot\nabla\bfphi_k-\bfe_1\times\bfphi_k]\cdot\bfphi\; \rmd\bfx.
\end{align*}
(The surface integral over $\partial\Omega_R$ equals zero
because the integrand is equal to zero a.e.~in
$\partial\Omega_R$.) Inequalities (\ref{4.1}) guarantee the
convergence of the last integral.

The term involving the nonlinear operator $\cN$ can be estimated as
follows: $\cn44$
\begin{align}
\| P'\cN\bfphi\|_2\ &=\ \sup_{\boldsymbol{\psi}\in\bfL^2(\Omega)}\
\frac{\bigl| (P'\cN\bfphi,\bfpsi)_2\bigr|}{\|\bfpsi\|_2}\ =\
\sup_{\boldsymbol{\psi}\in\Ls'}\ \frac{\bigl|
(\cN\bfphi,\bfpsi)_2\bigr|}{\|\bfpsi\|_2} \nonumber \\
&=\ \sup_{\boldsymbol{\psi}\in \Ls'}\ \frac{1}{\|\bfpsi\|_2}\,
\biggl| \int_{\Omega} \bfphi\cdot\nabla\bfpsi\cdot\bfphi\;
\rmd\bfx \biggr|\ \leq\ \sup_{\boldsymbol{\psi}\in \Ls'}\
\frac{\|\bfphi\|_4^2\, |\bfpsi|_{1,2}}{\|\bfpsi\|_2} \nonumber \\
\noalign{\vskip 2pt}
&\leq\ \cc44\, \|\bfphi\|_2^{1/2}\, |\bfphi|_{1,2}^{3/2}.
\label{4.4}
\end{align}
(We use H\"older's and Sobolev's inequalities and the inclusion
$\bfpsi\in\Ls'$.) Applying these inequalities to the integrals
in the formula for $\bfz_2(t)$ and estimating the norms
$\|\bfvr\|_{1,2}$ and $\|\bfvi\|_{1,2}$ by means of
(\ref{3.9}), we obtain (\ref{3.30}).

\vspace{4pt} \noindent
{\bf \ref{Se4}.2. The system (\ref{3.6}), (\ref{3.7}).} Let us
 first show that (\ref{3.6}) is an equation in $\Ls''$.
Denote $\bfw':=P'\bfw$ and $\bfw'':=P''\bfw$. We claim that
$\bfw'\equiv\bfzero$. Since $\rmd\bfw'/\rmd t\equiv
P'(\rmd\bfw/\rmd t)$ for a.a.~$t\in(0,T)$ and,  from (\ref{2.3}) and (\ref{4.3}), we have
\begin{displaymath}
P'\br\frac{\rmd\bfw}{\rmd t}\ =\ P'\br\Bigl( \frac{\rmd\bfw}{\rmd
t}-\omega\br B^0\bfw-u_{\infty}\br B^1\bfw\Bigr)+P'\bigl(\omega\br
B^0\bfw+u_{\infty}\br B^1\bfw\bigr)\,,
\end{displaymath}
equation (\ref{3.6}) can
be rewritten as follows
\begin{align*}
\frac{\rmd\bfw'}{\rmd t}+\Bigl(\frac{\rmd\bfw}{\rmd t}-\omega
B^0\bfw-u_{\infty} B^1\bfw\Bigr) &=\, \nu
A\bfw+B^2_s\bfw+P'\br\Bigl(\frac{\rmd\bfw}{\rmd t}-\omega\br
B^0\bfw-u_{\infty}\br B^1\bfw\Bigr) \\
& \hspace{15pt} +\kappa P'B^2_s\bfw+P''B^2_a\bfw+ P''[\cN\bfvr+
\rmi\br\cN\bfvi], \\ \noalign{\vskip 1pt}
\frac{\rmd\bfw'}{\rmd t}+P''\br\Bigl(\frac{\rmd\bfw}{\rmd
t}-\omega B^0\bfw-u_{\infty} B^1\bfw\Bigr) &=\, \bigl[\nu
A\bfw'+(1+\kappa)\br B^2_s\bfw'\bigr]+\bigl[\nu
A\bfw''+(1+\kappa)\br B^2_s\bfw''\bigr] \\
& \hspace{15pt} -\kappa P''B^2_s\bfw+P''B^2_a\bfw+ P''[\cN\bfvr+
\rmi\br\cN\bfvi].
\end{align*}
Projecting the last equation on $\Ls'$ and using the fact that
the operator $\nu A+(1+\kappa)\br B^2_s$ is reduced on $\Ls'$
and $\Ls''$, we obtain
\begin{displaymath}
\frac{\rmd\bfw'}{\rmd t}\ =\ \nu A\bfw'+(1+\kappa)\br B^2_s\bfw'.
\end{displaymath}
This, together with the initial condition $\bfw'(0)=\bfzero$,
yields $\bfw'\equiv\bfzero$.

Assume that $(\bfw,\bfz)$ is a solution of
(\ref{3.6})--(\ref{3.7}). Summing the equations (\ref{3.6}),
(\ref{3.7}) we observe that $\bfv\equiv\bfw+\bfz$ satisfies
equation (\ref{3.4}). Similarly, the sum of the initial
conditions in (\ref{3.8}) yields (\ref{3.5}).

On the other hand, if $\bfv$ is a solution of (\ref{3.4}),
(\ref{3.5}) on the time interval $(0,T)$ then, applying the
same method as in \cite{CuTu}, one can at first solve equation
(\ref{3.6}) with the initial condition $\bfw(0)=\bfzero$ as a
linear problem for the unknown $\bfw$, and afterwards equation
(\ref{3.7}) with the initial condition
$\bfz(0)=\delta\br(\bfzeta+\rmi\br\bfeta)$ as a linear problem
for the unknown $\bfz$. Both problems are uniquely solvable on
the same interval $(0,T)$.

\vspace{4pt} \noindent
{\bf \ref{Se4}.3. Estimate (\ref{3.16}).} \ We multiply
equation (\ref{3.6}) by $\bfw$, integrate in $\Omega$ and apply
the next identity, which comes from \cite[Lemma 1]{GaNe1} and
holds for a.a.~$t\in(0,T)$:
\begin{equation}
\int_{\Omega}\Bigl(\frac{\rmd\bfw}{\rmd\br t}-\omega\br
B^0\bfw-u_{\infty}\br B^1\bfw\Bigr)\cdot\bfw\; \rmd\bfx\ =\
\frac{\rmd}{\rmd\br t}\, \frac{1}{2}\, \|\bfw\|_2^2
\label{2.1a}
\end{equation}
(Lemma 1 in \cite{GaNe1} is in fact formulated for a solution
$\bfv$ of a concrete equation, but the equation is not used in the
proof.) Thus, applying (\ref{2.1a}), (\ref{3.1}) and the
identities $(P''B^2_a\bfw,\bfw)_2=0$ and $\bigl( P'[\omega\br
B^0\bfw+ u_{\infty}\br B^1\bfw+\kappa
B^2_s\bfw],\br\bfw\bigr)_2=0$ (following from the inclusion
$\bfw\in\Ls''$ and the fact that $B^2_a$ is skew symmetric), we
obtain
\begin{equation}
\frac{\rmd}{\rmd t}\, \frac{1}{2}\, \|\bfw\|_2^2\ \leq\ -\cc71\,
|\bfw|_{1,2}^2+(\cN\bfvr,\bfw\bigr)_2+(\cN\bfvi,\bfw\bigr)_2.
\label{3.15}
\end{equation}
Using H\"older's and Sobolev's inequalities, we get
\begin{align*}
(\cN\bfvr,\bfw)_2\ &=\ \int_{\Omega}
\bfvr\cdot\nabla\bfvr\cdot\bfw\; \rmd\bfx\ =\
\int_{\Omega} \bfvr\cdot\nabla\bfw\cdot\bfvr\; \rmd\bfx \\
\noalign{\vskip 1pt}
&\leq\ \|\bfvr\|_4^2\, |\bfw|_{1,2}\ \leq\ \iota\,
|\bfw|_{1,2}^2+c(\iota)\, \|\bfvr\|_4^4\ \leq\ \iota\,
|\bfw|_{1,2}^2+c(\iota)\, \|\bfvr\|_2\, |\bfvr|_{1,2}^3.
\end{align*}
The term $(\cN\bfvi,\bfw)_2$ can be estimated in the same way.
Applying these estimates of $(\cN\bfvr,\bfw)_2$ and
$(\cN\bfvi,\bfw)_2$ to (\ref{3.15}), choosing $\iota$ sufficiently
small and also applying inequality (\ref{3.9}), we obtain
\begin{equation}
\frac{\rmd}{\rmd t}\, \|\bfw\|_2^2+\cc71\, |\bfw|_{1,2}^2\ \leq\
\cc76\, (\delta\br K\, \rme^{at})^4. \label{4.6}
\end{equation}
Integrating (\ref{4.6}) with respect to $t$ and using the initial
condition $\bfw(0)=\bfzero$, we derive (\ref{3.16}).

\vspace{4pt} \noindent
{\bf \ref{Se4}.4. Estimate (\ref{3.18}).} \ In order to
estimate the norm $\|\bfz(t)\|_{1,2}$, we use the inequality
$\cn79$
\begin{equation}
\|\bfphi\|_{2,2}+\|B^0\bfphi\|_2\ \leq\ \cc79\, \bigl(
\|\cL\bfphi\|_2+\|\bfphi\|_2\bigr) \label{4.7}
\end{equation}
for $\bfphi\in D(\cL)$, which follows from \cite{Hi3} or
\cite{FaNe2}. (The inequality is proven for $\cL^0$ instead of
$\cL$ on the right hand side in \cite{Hi3} and \cite{FaNe2},
but it can be easily modified to the form (\ref{4.7}).) As the
integrands in the formula for $\bfz_2$ lie in $D(\cL)$ for
a.a.~$\tau\in(0,t)$, we obtain
\begin{align*}
\|\bfz(t)\|_{1,2}\ &\leq\ \delta\, \|\bfzeta+\rmi\br
\bfeta\|_{1,2}\ \rme^{at} \nonumber \\
& \hspace{15pt} +\cc79\int_0^t \bigl\| \cL\, \rme^{\cL(t-\tau)}\,
P' [\omega\br B^0\bfw(\tau)+u_{\infty}\br B^1\bfw(\tau)-\kappa\br
B^2_s \bfw(\tau)+B^2_a\bfw(\tau)] \bigr\|_2\; \rmd\tau \nonumber
\\
& \hspace{15pt} +\cc79\int_0^t \bigl\| \cL\, \rme^{\cL(t-\tau)}\,
P'[\cN\bfvr(\tau)+ \cN\bfvi(\tau)] \bigr\|_2\; \rmd\tau \nonumber
\\
& \hspace{15pt} +\cc79\int_0^t \bigl\| \rme^{\cL(t-\tau)}\, P'
[\omega\br B^0\bfw(\tau)+u_{\infty}\br B^1\bfw(\tau)-\kappa\br
B^2_s \bfw(\tau)+B^2_a\bfw(\tau)] \bigr\|_2\; \rmd\tau \nonumber
\\
& \hspace{15pt} +\cc79\int_0^t \bigl\| \rme^{\cL(t-\tau)}\,
P'[\cN\bfvr(\tau)+ \cN\bfvi(\tau)] \bigr\|_2\; \rmd\tau.
\end{align*}
Since $\cL$ commutes with $\rme^{\cL(t-\tau)}$ and $P'$ is a
projection onto a finite--dimensional space, we derive
(\ref{3.18}) in the same way as (\ref{3.17}).

\vspace{4pt} \noindent
{\bf \ref{Se4}.5. Estimate (\ref{3.26}).} \ In order to derive an
estimate of $|\bfw|_{1,2}$, we multiply equation (\ref{3.6}) by
$(-A\bfw)$ and use the formula
\begin{equation}
\int_{\Omega}\Bigl(\frac{\rmd\bfw}{\rmd\br t}-\omega\br
B^0\bfw-u_{\infty}\br B^1\bfw\Bigr)\cdot(-A\bfw)\; \rmd\bfx\ =\
\frac{\rmd}{\rmd\br t}\, \frac{1}{2}\, |\bfw|_{1,2}^2,
\label{2.1b}
\end{equation}
which follows from \cite[Lemma1]{GaNe1}, similarly as
(\ref{2.1a}). If we also apply the inequalities (\ref{2.5b}),
(\ref{2.5f}) and (\ref{4.2}), we get $\cn49\cn50$
\begin{align}
\frac{\rmd}{\rmd t}\, \frac{1}{2}\, & |\bfw|_{1,2}^2+\nu\,
\|A\bfw\|_2^2\ =\ -(B^2_s\bfw,A\bfw)_2+ \bigl(P'[\omega
B^0\bfw+u_{\infty} B^1\bfw-\kappa\br B^2_s\bfw],A\bfw\bigr)_2
\nonumber \\ \noalign{\vskip-1pt}
& \hspace{15pt} - (P''B_a\bfw,A\bfw)_2-\bigl(P''[\cN\bfvr+
\cN\bfvi],A\bfw\bigr)_2 \nonumber \\ \noalign{\vskip 2pt}
&\leq\ c\, |\bfw|_{1,2}\, \|A\bfw\|_2+c\, \|\cN\bfvr+
\cN\bfvi\|_2\, \|A\bfw\|_2 \nonumber \\ \noalign{\vskip 2pt}
&\leq\ \frac{\nu}{4}\, \|A\bfw\|_2^2+c(\nu)\, |\bfw|_{1,2}^2+
c\, \|A\bfv\|_2^{1/2}\, |\bfv|_{1,2}^{3/2}\, \|A\bfw\|_2 \nonumber \\
\noalign{\vskip 2pt}
&\leq\ \frac{\nu}{4}\, \|A\bfw\|_2^2+c(\nu)\,
|\bfw|_{1,2}^2+ c\, \|A\bfz\|_2^{1/2}\, |\bfv|_{1,2}^{3/2}\,
\|A\bfw\|_2+ c\, |\bfv|_{1,2}^{3/2}\, \|A\bfw\|_2^{3/2} \nonumber \\
\noalign{\vskip 2pt}
&\leq\ \frac{\nu}{2}\, \|A\bfw\|_2^2+c(\nu)\,
|\bfw|_{1,2}^2+ \frac{\nu}{2}\, \|A\bfz\|_2^2+c(\iota,\nu)\, \,
|\bfv|_{1,2}^6 \nonumber \\ \noalign{\vskip 2pt}
&\leq\ \frac{\nu}{2}\, \|A\bfw\|_2^2+c(\nu)\,
|\bfw|_{1,2}^2+\frac{\nu}{2}\, \|A\bfz\|_2^2+c(\nu)\,
(\delta K\, \rme^{at})^6, \nonumber \\ \noalign{\vskip 2pt}
\frac{\rmd}{\rmd t}\, & |\bfw|_{1,2}^2+\nu\, \|A\bfw\|_2^2\ \leq\
\cc49\, |\bfw|_{1,2}^2+\nu\, \|A\bfz\|_2^2+\cc50\, \, (\delta
K\, \rme^{at})^6, \label{4.5}
\end{align}
where $\cc49=\cc49(\nu)$ and $\cc50=\cc50(\nu)$.

In order to get rid of the term $ \|A\bfz\|_2^2$ on the
right hand side of (\ref{4.5}), we multiply equation (\ref{3.7})
by $(-A\bfz)$ and integrate in $\Omega$. Applying formula
(\ref{2.1b}) and inequalities (\ref{3.9}), (\ref{3.18}),
(\ref{4.2}) and (\ref{4.3}), we obtain $\cn33\cn34\cn35$
\begin{align}
\frac{\rmd}{\rmd t}\, \frac{1}{2}\, |\bfz|_{1,2}^2+\nu\,
\|A\bfz\|_2^2\ &=\ -(B^2\bfz,A\bfz)_2-\bigl(P'[\omega\br
B^0\bfw+u_{\infty}\br B^1\bfw-\kappa\br B^2_s\bfw+
B^2_a\bfw],A\bfz\bigr)_2 \nonumber \\
& \hspace{15pt} -\bigl(P'[\cN\bfvr+\cN\bfvi],A\bfz\bigr)_2
\nonumber \\ \noalign{\vskip 2pt}
&\leq\ \cc62\, |\bfz|_{1,2}\, \|A\bfz\|_2+c\, |\bfw|_{1,2}\,
\|A\bfz\|_2+c\, \|\bfv\|_2^{1/2}\, |\bfv|_{1,2}^{3/2}\,
\|A\bfz\|_2 \nonumber \\ \noalign{\vskip 2pt}
&\leq\ \frac{\nu}{2}\, \|A\bfz\|_2^2+c(\nu)\,
|\bfz|_{1,2}^2+c(\nu)\, |\bfw|_{1,2}^2+c(\nu)\, \|\bfv\|_2\,
|\bfv|_{1,2}^3, \nonumber \\
\frac{\rmd}{\rmd t}\, |\bfz|_{1,2}^2+\nu\, \|A\bfz\|_2^2\ &\leq\
\cc33\, \bigl[ \delta^2\, \rme^{2at}+2\cc78\, \delta\, \rme^{at}\,
(\delta\br K\, \rme^{at})^2+\cc78^2\, (\delta\br K\, \rme^{at})^4
\bigr] \nonumber \\ \noalign{\vskip-2pt}
& \hspace{15pt} +\cc34\, |\bfw|_{1,2}^2+\cc35\, (\delta K\,
\rme^{at})^4 \nonumber \\ \noalign{\vskip 2pt}
& \hspace{-50pt} \leq\ \cc34\, |\bfw|_{1,2}^2+\cc33\, \delta^2\,
\rme^{2at}+2\cc78\, (\delta\br K\, \rme^{at})^3+(\cc78^2+\cc35)\,
(\delta\br K\, \rme^{at})^4, \label{3.21}
\end{align}
where $\cc33$, $\cc34$ and $\cc35$ depend only on $\nu$.
Multiplying inequality inequality (\ref{4.6}) by
$\cn36\cc36:=(\cc49+\cc34)/\cc71$ and summing it with
(\ref{3.21}) and (\ref{4.5}), we obtain $\cn37$
\begin{displaymath}
\frac{\rmd}{\rmd t}\, \Bigl( |\bfw|_{1,2}^2+
|\bfz|_{1,2}^2+\cc36\, \|\bfw\|_2^2 \Bigr)\ \leq\ \cc33\,
\delta^2\, \rme^{2at}+2\cc78\, (\delta\br K\, \rme^{at})^3+
\cc37\, (\delta\br K\, \rme^{at})^4+\cc50\, (\delta\br K\,
\rme^{at})^6,
\end{displaymath}
where $\cc37:=\cc78^2+\cc35+\cc76\br\cc36$. Integrating this
inequality from $0$ to $t$ and summing with (\ref{3.17})
squared, we obtain
\begin{align*}
| & \bfw(t)|_{1,2}^2+|\bfz(t)|_{1,2}^2+\cc36\,
\|\bfw(t)\|_2^2+\|\bfz(t)\|_2^2 \\
& \leq\ \Bigl[\delta^2\, |\bfzeta+\rmi\br
\bfeta|_{1,2}^2+\frac{\cc33}{2a}\, \delta^2\,
\rme^{2at}+\frac{2\cc78}{3a}\, (\delta\br K\,
\rme^{at})^3+\frac{\cc37}{4a}\, (\delta\br K\, \rme^{at})^4+
\frac{\cc50}{6a}\, (\delta\br K\, \rme^{at})^6 \Bigr] \\
& \hspace{15pt} +\Bigl[ \delta^2\,
\|\bfzeta+\rmi\br\bfeta\|_2^2\ \rme^{2at}+
\frac{2\br\cc77}{K}\, \|\bfzeta+\rmi\br\bfeta\|_2\ (\delta
K\, \rme^{at})^3+\cc77^2\, (\delta K\, \rme^{at})^4 \Bigr] \\
&\leq\ \Bigl(1+\frac{\cc33}{2a}\Bigr)\,
\delta^2\, \rme^{2at}+\frac{2\br(3a\cc77+ \cc78)}{3a}\,
(\delta K\, \rme^{at})^3+
\Bigl(\frac{\cc37}{4a}+\cc77^2\Bigr)\,
(\delta K\, \rme^{at})^4+\frac{\cc50}{6a}\,
(\delta\br K\, \rme^{at})^6.
\end{align*}
We may assume without loss of generality that $\cc36>1$. Then
the left hand side is greater than or equal to
$\|\bfw(t)\|_{1,2}^2+\|\bfz(t)\|_{1,2}^2$. Thus, using the
inequality $\|\bfv(t)\|_{1,2}^2\leq 2\, \|\bfw(t)\|_{1,2}^2+2\,
\|\bfz(t)\|_{1,2}^2$, we get
\vspace{-8pt}
\begin{align*}
& \|\bfw(t)\|_{1,2}^2+\|\bfz(t)\|_{1,2}^2\ \leq\
\Bigl(1+\frac{\cc33}{2a}\Bigr)\, \delta^2\, \rme^{2at}+
\frac{2\br(3a\cc77+\cc78)}{3a}\, (\delta K\, \rme^{at})^3 \\
\noalign{\vskip 1pt}
& \hspace{115pt}
+\Bigl(\frac{\cc37}{4a}+\cc77^2\Bigr)\, (\delta K\,
\rme^{at})^4+\frac{\cc50}{6a}\, (\delta\br K\,
\rme^{at})^6.
\end{align*}
From this, we derive (\ref{3.26}) by standard manipulations.
The constants $\cc81$--$\cc84$ in (\ref{3.26}) depend only on
$a$, $\cc77$, $\cc78$, $\cc50$, $\cc33$ and $\cc37$.

\vspace{4pt} \noindent
{\bf Acknowledgement.} \ Part of this work was carried out when the first author was tenured with  the Eduard
\v{C}{e}ch Distinguished Professorship at the Mathematical Institute of the Czech Academy of Sciences in Prague. His work is also partially
supported by NSF Grant DMS-1614011 and   the
Mathematical Institute of the Czech Academy of Sciences (RVO
67985840). The second author also
acknowledges the support of the Grant Agency of the Czech
Republic (grant No.~17-01747S).

\noindent{\it Authors' addresses:}

\medskip \noindent
Giovanni Paolo Galdi, University of Pittsburgh, Department of
Mechanical Engineering, Pittsburgh, USA, e--mail:
galdi@pitt.edu

\medskip \noindent
Ji\v r\'\i\ Neustupa, Czech Academy of Sciences, Institute of
Mathematics, \v{Z}itn\'a 25, 115 67 Praha 1, Czech Republic,
e--mail: neustupa@math.cas.cz

\end{document}